\documentclass[11pt,a4paper]{amsart}
\usepackage{amsmath,amsfonts,amsthm,amsopn,color,amssymb,enumitem}
\usepackage{palatino}
\usepackage{graphicx}
\usepackage{caption}
\usepackage{subcaption}
\usepackage[colorlinks=true]{hyperref}
\hypersetup{urlcolor=blue, citecolor=red, linkcolor=blue}
\usepackage{changepage}
\usepackage{float}

\usepackage{esint}
\usepackage{pgfplots}
\usepackage{mathrsfs}
\usetikzlibrary{arrows}

\newtheorem{theorem}{Theorem}[section]
\newtheorem{lemma}[theorem]{Lemma}

\newtheorem{proposition}[theorem]{Proposition}
\newtheorem{remark}[theorem]{Remark}



\newcommand{\e}{\varepsilon}

\DeclareMathOperator{\dist}{dist}

\DeclareMathOperator{\supp}{supp}

\renewcommand{\t}{\theta}

\def\bbm[#1]{\mbox{\boldmath $#1$}}
\newcommand{\beq }{\begin{equation}}
\newcommand{\eeq }{\end{equation}}


\def\sideremark#1{\ifvmode\leavevmode\fi\vadjust{\vbox to0pt{\vss
 \hbox to 0pt{\hskip\hsize\hskip1em
 \vbox{\hsize3cm\tiny\raggedright\pretolerance10000
  \noindent #1\hfill}\hss}\vbox to8pt{\vfil}\vss}}}%


\setlength{\hoffset}{-0.5cm}
\setlength{\textwidth}{14cm}

\begin{document}

\title[Vectorial $p$-Stokes system]{Symmetry and Monotonicity results for solutions of vectorial $p$-Stokes systems}

\author{Rafael López-Soriano, Luigi Montoro, Berardino Sciunzi}

\email[Rafael López-Soriano]{ralopezs@math.uc3m.es}
\email[Luigi Montoro]{montoro@mat.unical.it}%
\email[Berardino Sciunzi]{sciunzi@mat.unical.it}
\address[R. López-Soriano,]{Departamento de Matem\'aticas,
Universidad Carlos III de Madrid, Av. Universidad 30, 28911 Legan\'es, Madrid, Spain}
\address[L. Montoro, B. Sciunzi]{Dipartimento di Matematica e Informatica, Università della Calabria,
Ponte Pietro Bucci 31B, 87036 Arcavacata di Rende, Cosenza, Italy}


\keywords{p-Stokes system, p-Laplacian system, comparison principle, moving plane method}

\subjclass[2020]{35J47, 35J92, 76A05}


\maketitle

\begin{abstract}
In this paper we shall study qualitative properties of a $p$-Stokes type system, namely  $$ -{\boldsymbol \Delta}_p{\boldsymbol u}=-\operatorname{\bf div}(|D{\boldsymbol u}|^{p-2}D{\boldsymbol u}) = {\boldsymbol f}(x,{\boldsymbol u})\,\, \mbox{ in $\Omega$},$$
where ${\boldsymbol \Delta}_p$ is the $p$-Laplacian vectorial operator. More precisely, under suitable assumptions on the domain $\Omega$ and the function $\boldsymbol{ f}$, it is deduced that system solutions are symmetric and monotone. Our main results are derived from a vectorial version of the weak and strong comparison principles, which enable to proceed with the moving-planes technique for systems. As far as we know, these are the first qualitative kind results involving vectorial operators.
\end{abstract}
\section{Introduction}
\setcounter{equation}{0}
\noindent 
Let $\Omega$ be a bounded smooth domain in $\mathbb{R}^n$ with $n\geq 2$. Along this work, we shall consider a vector field ${\boldsymbol u}\,:\, \Omega \rightarrow \mathbb{R}^N$, $N\geq 1$,  namely ${\boldsymbol u}=(u^1,\ldots,u^N)$, that is a weak $C^{1}(\overline{\Omega})$ solution to the $p$-Laplace system 
\beq\label{system}\tag{$p$-S}
\begin{cases}
-{\boldsymbol \Delta}_p{\boldsymbol u}=   {\boldsymbol f}(x,{\boldsymbol u})& \mbox{in $\Omega$}\\
{\boldsymbol u}= 0  & \mbox{on  $\partial \Omega$}, 
\end{cases}
\eeq
where the vectorial operator $-{\boldsymbol \Delta}_p{\boldsymbol u}$ is defined, for  smooth functions, as 
\beq\label{pLaplace}
-{\boldsymbol \Delta}_p{\boldsymbol u}=-\operatorname{\bf div}(|D{\boldsymbol u}|^{p-2}D{\boldsymbol u})
\eeq
with $p>1$. The vectorial function $ {\boldsymbol f}:\Omega \times \mathbb{R}^N\to\mathbb{R}^N$ will fulfill a set of assumptions \eqref{hpf} described in Section~\ref{preli}.

Systems involving the $p$-Laplacian operator were firstly addressed by J.-L. Lions in contributions from the sixties \cite{Lions1,Lions2}. They were introduced as a generalization of the classical Navier-Stokes system. Actually, this kind of vectorial operators arises in the study of non-Newtonian fluids.

In the spacial case $N=n$, if one replaces $D{\boldsymbol u}$ by $\overline{D}{\boldsymbol u}$ in the operator \eqref{pLaplace}, where
$$
\overline{D}\boldsymbol {u}= D{\boldsymbol u} + D^T {\boldsymbol u},
$$
then \eqref{system} models the stationary behavior of some fluids without kinematic pressure. In that case, the symmetric $p$-Laplacian represents the stress tensor, $\boldsymbol{u}$ is the velocity of the fluid and $\boldsymbol{f}$ is the external body force. We refer to the reader to the monograph \cite{BiAmHas} for precise details, which includes a complete discussion of power-law models. The evolutionary analogue problem has been analyzed in \cite{BerDiRu,Mal1,Mal2}, which studies the motion of the fluid. Let us emphasize  that the shear thinning fluids are the best modeled by a constitutive law, which corresponds to a shear exponent $p\in (1, 2]$.

Another operator typically considered in literature is the so-called modified $p$-Laplacian, namely $\operatorname{\bf div}(|D{\boldsymbol u} + \mu |^{p-2}D{\boldsymbol u}) $ with $\mu>0$. In that case the operator is neither degenerate nor singular. For that problem the existence of solutions and high regularity results is quite known, see \cite{BeCr1,BeCr2,Cri} for more details.

\

In this work, we will consider the operator \eqref{pLaplace} arising in the phenomenon described above. We shall restrict ourselves to the case of positive solutions to \eqref{system}, namely ${\boldsymbol u} >0$ in~$\Omega$, meaning that $u^\ell>0$ in $\Omega$ for all $\ell=1,\ldots, N$. In fact, as a first step in our analysis, we derive a strong maximum principle for $p$-Stokes type systems under quite general hypotheses on $\boldsymbol{f}$. This result implies that each non-negative solution is indeed strictly positive.

The regularity of solutions to $p$-Laplacian systems has been carried out in \cite{KuuMin}. In particular, the authors prove that a solution of \eqref{system} is $C^{1,\alpha}_{loc}(\Omega)$ for every $p>1$. In \cite{KuuMin2}, the authors obtain pointwise estimates in terms of Riesz and Wolff potentials, which allows one to deduce sharp regularity results for solutions. We also refer the reader to \cite{Minsur} for a survey on the regularity for the $p$-Laplacian and other general operators.  Second-order estimates are provided by Cianchi and Maz'ya in \cite{Cma} for the Dirichlet and Neumann boundary problems. In particular, they obtain that $|D\boldsymbol{u}|^{p-2} D\boldsymbol{u} \in W^{1,2}_{loc}(\Omega)$ under very general hypotheses on $\boldsymbol{f}$ and $p$. Such a natural condition will be supposed in our main results.

\

The goal of this paper is to obtain some qualitative properties which allows us to describe the solutions of the $p$-Stokes system. More precisely, we will prove monotonicity and symmetry for \eqref{system} by assuming some conditions on $\Omega$ and $\boldsymbol{f}$. In order to do it, we will use the so-called moving-planes procedure, which, to our knowledge, has not yet been applied to vectorial case.

Typically, the method relies on a combination of a weak and a strong comparison principles. The scalar case, i.e. \eqref{system} with $N=1$, has been extensively developed, starting from the pioneer works \cite{Dam,DamPa} for $1<p<2$. Monotonicity and symmetry results for problems involving the $p$-Laplacian operator were extended in \cite{DamSci}, where the versions of the weak and the strong comparison principles were generalized taking into account the set of the critical points of the solution.

The global nature of the operator $\boldsymbol{\Delta}_p \boldsymbol{u}$ presents a delicate issue in order to apply the method in a standard way. In fact, as far as we know, it was unknown maximum principles for systems like \eqref{system}, and therefore neither comparison principles. In the case that the vectorial function $\boldsymbol{f}$ is strictly increasing with respect to $x$, we are able to circumvent these difficulties and present a series of maximum and comparison principles. This assumption on  $\boldsymbol{f}$ enables to separate strictly sub and supersolutions, see Theorem~\ref{thm:strong} for more information. We point out that the approached followed in the work in the case that $1<p\leq 2$.

The critical set $\mathcal{Z}_u$ will play a crucial role to prove the underlying comparison principles. More precisely, this set is formed by the points where the derivatives of the solution ${\boldsymbol u}$ vanish, i.e.,
\begin{equation}\label{eq:critraf}
\mathcal Z_{u}:=\{x\in \Omega \, :\, D {\boldsymbol u }=0\}.
\end{equation}
Along this work, we show that it is a zero-measure set and $\Omega\setminus \mathcal{Z}_u$ is connected, see Propositions~\ref{lemmaZuzero} and \ref{lem:connesso}. 

\

Now, we can state our main theorem:

\begin{theorem}\label{thm:main1}
Let $\Omega$ be a bounded smooth domain of $\mathbb R^n$, $n\geq 2$, convex with respect to the $x_n$-direction and symmetric with respect to $T_0$, where
\begin{equation}\label{eq:T0}
 T_0=\{x\in \Omega : x_n=0\}.
 \end{equation}
 Let ${\boldsymbol u}$  be a $C^1(\overline\Omega)$ weak solution to \eqref{system} with $1<p\leq2$ such that $$|D\boldsymbol{u}|^{p-2} D\boldsymbol{u} \in W_{loc}^{1,2}(\Omega),$$ ${\boldsymbol f}$ verifies \eqref{hpf},
\begin{equation}\label{eq:fcresc}
\boldsymbol f(x_1,\ldots,x_n,\boldsymbol u)<\boldsymbol f(x_1,\ldots,y_n,\boldsymbol u) \quad \mbox{ where } x_n<y_n<0, 
\end{equation}
and
\begin{equation}\label{eq:fsym}
\boldsymbol f(x_1,\ldots,x_{n-1},-x_n,\boldsymbol u)=\boldsymbol f(x_1,\ldots,x_{n-1},x_n,\boldsymbol u).
\end{equation}
 
\noindent Then $\boldsymbol u$ is symmetric with respect to~$T_0$ and non-decreasing with respect to the $x_n$-direction in~$\Omega_0=\{x\in \Omega \, :\, x_n<0\}$. Moreover,
\begin{equation}\label{eq:derivpositfollows}
{\partial_{x_n} \boldsymbol u}\geq 0\quad \text{in }\Omega_0.
\end{equation}
\end{theorem}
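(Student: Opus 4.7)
The plan is to apply the moving planes method in the $x_n$-direction, using the vectorial weak and strong comparison principles developed earlier in the paper as black boxes. Set $a:=\inf\{x_n:x\in\Omega\}$ and, for $\lambda\in(a,0]$, define
\[
\Omega_\lambda:=\{x\in\Omega:x_n<\lambda\},\qquad x^\lambda:=(x_1,\ldots,x_{n-1},2\lambda-x_n),
\]
together with the reflected solution $\boldsymbol u_\lambda(x):=\boldsymbol u(x^\lambda)$. Since $-\boldsymbol{\Delta}_p$ is invariant under Euclidean reflections, $\boldsymbol u_\lambda$ satisfies $-\boldsymbol{\Delta}_p\boldsymbol u_\lambda=\boldsymbol f(x^\lambda,\boldsymbol u_\lambda)$ in $\Omega_\lambda$. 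Combining \eqref{eq:fcresc} with the symmetry \eqref{eq:fsym}, one checks that $\boldsymbol f(x,\boldsymbol u_\lambda(x))\le\boldsymbol f(x^\lambda,\boldsymbol u_\lambda(x))$ componentwise for every $x\in\Omega_\lambda$, with strict inequality off $T_\lambda$; thus $\boldsymbol u_\lambda$ is a supersolution of the equation for $\boldsymbol u$ in $\Omega_\lambda$, while on $\partial\Omega_\lambda$ one has $\boldsymbol u\le\boldsymbol u_\lambda$ (from the Dirichlet condition on $\partial\Omega\cap\partial\Omega_\lambda$ and equality on $T_\lambda$).

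I would first \emph{start the procedure}: for $\lambda$ close to $a$, $\Omega_\lambda$ has small Lebesgue measure, so the vectorial weak comparison principle yields $\boldsymbol u\le\boldsymbol u_\lambda$ in $\Omega_\lambda$ componentwise. Define
\[
\Lambda:=\sup\bigl\{\lambda\in(a,0]\,:\,\boldsymbol u\le\boldsymbol u_\mu\text{ in }\Omega_\mu\text{ for every }\mu\in(a,\lambda]\bigr\},
\]
and note that by continuity the inequality $\boldsymbol u\le\boldsymbol u_\Lambda$ persists in $\Omega_\Lambda$. The heart of the argument is to show $\Lambda=0$.

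Assume by contradiction $\Lambda<0$. The strict inequality $\boldsymbol f(x,\boldsymbol u_\Lambda)<\boldsymbol f(x^\Lambda,\boldsymbol u_\Lambda)$ in $\Omega_\Lambda$ (from \eqref{eq:fcresc}) rules out the possibility $\boldsymbol u\equiv\boldsymbol u_\Lambda$ on any open set. The vectorial strong comparison principle of Theorem~\ref{thm:strong}, applied on the connected set $\Omega_\Lambda\setminus\mathcal Z_u$ (Proposition~\ref{lem:connesso}), then upgrades $\boldsymbol u\le\boldsymbol u_\Lambda$ to a strict inequality off the critical set. Using that $\mathcal Z_u$ has zero measure (Proposition~\ref{lemmaZuzero}), I would extract a compact $K\subset\subset\Omega_\Lambda$ carrying most of the measure of $\Omega_\Lambda$ on which $\boldsymbol u_\Lambda-\boldsymbol u\ge\eta>0$ uniformly. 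By continuous dependence of $\boldsymbol u_\lambda$ on $\lambda$, this lower bound survives on $K$ for $\boldsymbol u_{\Lambda+\tau}-\boldsymbol u$ with $\tau>0$ small, while on the thin strip $\Omega_{\Lambda+\tau}\setminus K$ the weak comparison principle reapplies thanks to smallness of measure, producing $\boldsymbol u\le\boldsymbol u_{\Lambda+\tau}$ throughout $\Omega_{\Lambda+\tau}$. This contradicts the maximality of $\Lambda$, so $\Lambda=0$.

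Once $\boldsymbol u\le\boldsymbol u_0$ in $\Omega_0$ is proved, running the symmetric procedure from $\sup_{x\in\Omega}x_n$ (the hypotheses are symmetric in $x_n$) yields the opposite inequality, giving $\boldsymbol u(x)=\boldsymbol u(x^0)$ and hence symmetry across $T_0$. Monotonicity follows because, for $x\in\Omega_0$ and any $\lambda\in(x_n,0)$, $\boldsymbol u(x)\le\boldsymbol u_\lambda(x)=\boldsymbol u(x_1,\ldots,x_{n-1},2\lambda-x_n)$ with $2\lambda-x_n>x_n$, forcing $\boldsymbol u$ to be non-decreasing in $x_n$ and yielding \eqref{eq:derivpositfollows}. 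I expect the main obstacle to be precisely the contradiction step when $\Lambda<0$: the vectorial strong comparison principle interacts delicately with the degenerate critical set $\mathcal Z_u$ (especially since components of $\boldsymbol u$ may degenerate on different sets), and converting pointwise strict inequality on $\Omega_\Lambda\setminus\mathcal Z_u$ into a uniform lower bound on a compact set of near-full measure is the technical core that makes the scheme close in the vectorial setting with $1<p\le 2$.
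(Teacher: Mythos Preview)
Your outline follows the same moving-plane strategy as the paper, but two points deserve sharpening. First, the paper launches the procedure via Hopf's Lemma (Theorem~\ref{thm:hopf}) rather than a small-domain comparison; either works, but note that the weak comparison principle actually proved (Theorem~\ref{thm:weak1}) is \emph{not} a ``small $n$-dimensional measure'' statement. It is a foliation result: on each $x_n$-fiber $I_{x'}$, the support of $(\boldsymbol u-\boldsymbol u_\lambda)^+$ must lie in $Z_{x'}^\varepsilon\cup L_{x'}^\theta$, where $Z_{x'}^\varepsilon$ is the set where $|D\boldsymbol u|+|D\boldsymbol u_\lambda|\le\varepsilon$ and $L_{x'}^\theta$ has small \emph{one-dimensional} measure. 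In the contradiction step the paper therefore chooses the compact $K$ inside $\Omega_{\bar\lambda}\setminus(\mathcal Z_u\cup\mathcal Z_{u_{\bar\lambda}})$---you must avoid the critical set of the reflected function as well, since Theorem~\ref{thm:strong} requires $\boldsymbol v\in C^2$---and then splits $\Omega_{\bar\lambda+\varepsilon}\setminus K$ into a neighborhood of $\mathcal Z_u\cup\mathcal Z_{u_{\bar\lambda}}$ (feeding $Z_{x'}^\varepsilon$, by continuity of $D\boldsymbol u$) and a genuinely thin remainder (feeding $L_{x'}^\theta$). Your ``compact of near-full measure'' heuristic captures the intent but not this bipartite structure, which is exactly what Theorem~\ref{thm:weak1} is tailored to and what makes the scheme close for $1<p\le 2$.
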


Let us also analyze the particular case of a $p$-Stokes system considered in a pipe-type domain $\mathcal{P}$. We define a pipe-shaped domain as
$$
\mathcal{P}=(-1,1)\times \mathcal{P}^{\bot },
$$
where the section of the tube $\mathcal{P}^\bot$ is a subdomain of $\mathbb{R}^{n-1}$. It will be also assumed that $\mathcal{P}^\bot$ satisfies
\begin{enumerate}[label=$({\mathcal{P}^*})$, ref=$({\mathcal{P}^*})$]
\item \label{hpP}

\

\begin{itemize}
\item[$(i)$]  $\mathcal{P}^\bot$ is convex with respect to $x_n$-direction; 
\item[$(ii)$]  $\mathcal{P}^\bot$ is symmetric with respect to the hyperplane $T_0$, defined in \eqref{eq:T0}.
\end{itemize}
\end{enumerate}

 We shall distinguish between the boundaries of $\mathcal{P}$ as
$$
\partial \mathcal{P} = \mathcal{P}^{\bot}_{-1} \cup \mathcal{P}^{\bot}_{1}\cup \partial_w  \mathcal{P},
$$
where the start and end edges are $ \mathcal{P}^{\bot}_{i}=\{i\}\times \mathcal{P}^{\bot}$ with $i=-1,1$ and the tube wall is $\partial_w  \mathcal{P}= (-1,1)\times \partial  \mathcal{P}^{\bot} $, see~Figure below. Next, let us present a second system defined in a pipe domain with Dirichlet boundary condition on the tube walls
\beq\label{system2}\tag{$p$-S2}
\begin{cases}
-{\boldsymbol \Delta}_p{\boldsymbol u}=   {\boldsymbol f}(x,{\boldsymbol u})& \mbox{in $\mathcal{P}$}\\
{\boldsymbol u}= 0  & \mbox{on  $\partial_w \mathcal{P}$}.
\end{cases}
\eeq

\definecolor{wrwrwr}{rgb}{0.3803921568627451,0.3803921568627451,0.3803921568627451}

\begin{figure}[H]
\begin{adjustwidth*}{-2cm}{-2cm} 
\begin{tikzpicture}[scale=0.7,line cap=round,line join=round,>=triangle 45,x=1cm,y=1cm]
\clip(-11.047297720595553,-9.449659233791984) rectangle (21.66460545417252,9.225189081482094); 
\draw [rotate around={59.58952885881112:(-4.837036161116002,1.4399184672730554)},line width=1pt,dotted,color=wrwrwr] (-4.837036161116002,1.4399184672730554) ellipse (2.8090178423254955cm and 1.1246459317152533cm);
\draw [rotate around={63.510636730696234:(8.59,1.23)},line width=1pt,dotted,color=wrwrwr] (8.59,1.23) ellipse (3.63268110036997cm and 1.3173351801971749cm);
\draw [shift={(4.3259630032569,-17.454222049042038)},line width=1pt,color=wrwrwr]  plot[domain=1.311768339868434:1.9208426327730799,variable=\t]({1*22.720066841030565*cos(\t r)+0*22.720066841030565*sin(\t r)},{0*22.720066841030565*cos(\t r)+1*22.720066841030565*sin(\t r)});
\draw (-5.3,1) node[below] {\Large{$\mathcal{P}^\bot_{-1}$}};
\draw (8,1) node[below] {\Large{$\mathcal{P}^\bot_{1}$}};
\draw (3,1.5) node[above] {\Large{$T_{0}$}};
\draw (-0.5,4.75) node[above] {\Large{$\partial_w\mathcal{P}$}};
\draw (2.25,-2.55) node[above] {{$x_1$}};
\draw (1.5,-0.85) node[above] {{$x_n$}};
\draw [shift={(1.855059584917851,15.187459470574343)},line width=1pt,color=wrwrwr]  plot[domain=4.260955368840012:5.0088547653271425,variable=\t]({1*18.043399359244017*cos(\t r)+0*18.043399359244017*sin(\t r)},{0*18.043399359244017*cos(\t r)+1*18.043399359244017*sin(\t r)});
\draw [line width=2pt,dash pattern=on 1pt off 1pt on 1pt off 4pt,color=wrwrwr] (-4.523468259465059,1.464614698810252)-- (8.555109285834464,1.464614698810252);
\draw [->,line width=1pt,color=wrwrwr] (1.3819887408947766,-1.8127593432742708) -- (2.278628809012238,-0.48325855261734185); 
\draw [->,line width=1pt,color=wrwrwr] (1.1964770026635771,-1.4726544898504053) -- (2.742408154590235,-1.5344917359274717);
\draw (2.5,-4.5)  node[below] {Figure: $\mathcal{P}$ is a symmetric pipe-shaped domain with faces $\mathcal{P}^\bot_i$};
\draw  (-7.5,-5.5) rectangle (12,6.5) ;
\end{tikzpicture}
\end{adjustwidth*}
\end{figure}

\vspace{-2cm}

The following result states that the monotonicity and symmetry for solutions of \eqref{system2} are in fact inherited from its behavior on the faces $\mathcal{P}_i^\bot$.

\begin{theorem}\label{thm:main2}
Let $\mathcal{P}$ be a bounded pipe-shaped domain of $\mathbb R^n$, $n\geq 2$ satisfying \ref{hpP}. Let ${\boldsymbol u}$  be a  $C^1(\overline\Omega)$ weak solution to \eqref{system2} with $1<p\leq2$ such that $|D\boldsymbol{u}|^{p-2} D\boldsymbol{u} \in W_{loc}^{1,2}(\Omega)$, ${\boldsymbol f}$ verifies \eqref{hpf}, \eqref{eq:fcresc} and \eqref{eq:fsym}. Suppose that
\begin{equation}\label{eq:monface}
{\partial_{x_n} \boldsymbol u}\geq 0  \, \, \mbox{ in } \mathcal{P}_i^\bot \cap \overline{\mathcal{P}_0} \, \mbox{ and } \, \boldsymbol{u}(i,x_2,\cdots,x_{n-1},x_n)=\boldsymbol{u}(i,x_2,\cdots,x_{n-1},-x_n)
\end{equation} 
for $i=-1,1$ and $\mathcal{P}_0=\{x\in \mathcal{P} \, :\, x_n<0\}$. 
\noindent Then $\boldsymbol u$ is symmetric with respect to~$T_0$ and non-decreasing with respect to the $x_n$-direction in~$\mathcal{P}_0$. Moreover,
\begin{equation*}
{\partial_{x_n} \boldsymbol u}\geq 0\quad \text{in}\quad\mathcal{P}_0.
\end{equation*}
\end{theorem}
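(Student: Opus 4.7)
The proof follows the moving-planes scheme already carried out for Theorem \ref{thm:main1}, with one additional ingredient needed to cope with the fact that no Dirichlet datum is imposed on the faces $\mathcal{P}_i^\bot$. The plan is to consider the family of hyperplanes $T_\lambda=\{x_n=\lambda\}$ for $\lambda\in(\lambda_0,0]$, where $\lambda_0:=\inf_{x\in\mathcal{P}}x_n$, and for each such $\lambda$ to set $\mathcal{P}_\lambda:=\{x\in\mathcal{P}:x_n<\lambda\}$ and $\boldsymbol{u}_\lambda(x):=\boldsymbol{u}(x_1,\dots,x_{n-1},2\lambda-x_n)$.

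The first and only genuinely new step is to verify the boundary inequality $\boldsymbol{u}_\lambda\geq\boldsymbol{u}$ componentwise on $\partial\mathcal{P}_\lambda$. On $\partial_w\mathcal{P}\cap\partial\mathcal{P}_\lambda$ this is immediate since $\boldsymbol{u}=0$ there while $\boldsymbol{u}_\lambda\geq 0$ by the strong maximum principle for systems established earlier in the paper; on $T_\lambda$ equality holds by construction. The delicate region is $\mathcal{P}_i^\bot\cap\overline{\mathcal{P}_\lambda}$, and here hypothesis \eqref{eq:monface} is crucial: for a point $x=(i,x',x_n)$ with $x_n\leq\lambda<0$, either $2\lambda-x_n\leq 0$, in which case monotonicity of $\boldsymbol{u}$ in the $x_n$ variable on the face directly yields $\boldsymbol{u}(x_\lambda)\geq \boldsymbol{u}(x)$; or $2\lambda-x_n>0$, and then the symmetry part of \eqref{eq:monface} rewrites $\boldsymbol{u}(x_\lambda)=\boldsymbol{u}(i,x',x_n-2\lambda)$, with $x_n-2\lambda\in[x_n,0]$, so monotonicity applies once more. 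Thus the required boundary comparison holds for every $\lambda\in(\lambda_0,0)$.

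With the boundary inequality available, the rest of the argument proceeds exactly as in Theorem \ref{thm:main1}. The vectorial weak comparison principle applied to the pair $(\boldsymbol{u}_\lambda,\boldsymbol{u})$—with the comparison between the right-hand sides supplied by \eqref{eq:fcresc}--\eqref{eq:fsym}—starts the procedure for $\lambda$ close to $\lambda_0$, where the cap is thin. Setting
\[
\bar\lambda:=\sup\{\lambda\leq 0:\ \boldsymbol{u}_\mu\geq\boldsymbol{u}\text{ in }\mathcal{P}_\mu\text{ for every }\mu\in(\lambda_0,\lambda]\},
\]
one shows $\bar\lambda=0$: otherwise the strong comparison principle for systems, applied in each connected component of $\mathcal{P}_{\bar\lambda}\setminus\mathcal{Z}_{\boldsymbol{u}_{\bar\lambda}}$ (where one uses that $\mathcal{Z}_{\boldsymbol{u}}$ has zero measure and that its complement is connected, as proved in the paper) combined with a standard thin-cap argument contradicts the maximality of $\bar\lambda$. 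Reaching $\bar\lambda=0$ delivers $\partial_{x_n}\boldsymbol{u}\geq 0$ in $\mathcal{P}_0$, and performing the symmetric procedure from above gives $\boldsymbol{u}_0\leq \boldsymbol{u}$ and hence symmetry across $T_0$.

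The main obstacle is precisely the face-boundary comparison just described: this is the point where Theorem \ref{thm:main2} departs from Theorem \ref{thm:main1}, and \eqref{eq:monface} is designed exactly to supply it. A secondary technical issue—the degeneracy of the operator on the critical set $\mathcal{Z}_{\boldsymbol{u}}$ when invoking the strong comparison principle—is controlled by the vectorial versions of the weak and strong comparison principles and the structural results on $\mathcal{Z}_{\boldsymbol{u}}$ already developed in the paper.
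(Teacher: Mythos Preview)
Your proposal is correct and follows essentially the same approach as the paper's own proof, which simply states that the argument of Theorem~\ref{thm:main1} carries over once one checks that \eqref{eq:monface} guarantees $\boldsymbol{u}\leq\boldsymbol{u}_\lambda$ on $\partial\mathcal{P}_\lambda$, including on the faces $\mathcal{P}_i^\bot$. Your explicit case analysis on the faces (distinguishing $2\lambda-x_n\leq 0$ versus $2\lambda-x_n>0$) makes precise what the paper leaves implicit; the only minor variations are that you start the procedure via the thin-cap weak comparison principle rather than Hopf's Lemma, and that the strong comparison step should be run on $\mathcal{P}_{\bar\lambda}\setminus(\mathcal{Z}_{\boldsymbol u}\cup\mathcal{Z}_{\boldsymbol u_{\bar\lambda}})$ rather than just removing $\mathcal{Z}_{\boldsymbol u_{\bar\lambda}}$.
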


\

The paper is organized as follows. In Section~\ref{preli} we set the notation, prove some preliminary results, including a Hopf's type lemma and strong maximum principle, which allows us to derive the underlying weak and strong comparison principles. The proofs of our main theorems are completed in Section~\ref{mainproof}.

\section{Preliminaries}\label{preli}
\setcounter{equation}{0}
\noindent 
\subsection{Notation} Generic fixed numerical constants will be denoted by $C$ (with subscript in some case) and will be allowed to vary within a single line or formula. Moreover $f^+$  will stand for the positive a part of a function, i.e.
$f^+=\max\{f,0\}$. We also denote $|A|$ the Lebesgue measure of the set $A$.

We will use the bold style to stress the vectorial nature of  different quantities. For instance, a $N$-vectorial function $\boldsymbol{w}$ defined in $\Omega$ will be written as
$$
\boldsymbol{w}(x)=\left(w^1(x),\ldots,w^N(x) \right),
$$
where $w^\ell$ is a scalar functions defined in $\Omega$ for $\ell=1,\ldots,N$.

\

We say that a vector field ${\boldsymbol u}$ solves weakly  \eqref{system} if and only if ${\boldsymbol u} \in W^{1,p}_{0}(\Omega)$ and 
\beq\label{weak}
\int_{\Omega} |D{\boldsymbol u}|^{p-2}D {\boldsymbol u} : D {\boldsymbol \varphi}\, dx = \int_{\Omega} {\boldsymbol f}(x,{\boldsymbol u}) \cdot \boldsymbol{\varphi}\, dx, \quad \forall  {\boldsymbol \varphi} \in W^{1,p}_{0}(\Omega).
\eeq

Here 
\[D {\boldsymbol u}=  \begin{pmatrix}
         \nabla u^1 \\
         \vdots \\
         \nabla u^N
        \end{pmatrix},\] 
        where 
\[\nabla u^\ell = \left(\frac{\partial u^\ell}{\partial x_1},\ldots,\frac{\partial u^\ell}{\partial x_n}\right)\,\, \text{for } \ell=1,\ldots,N,\]
and
\[\displaystyle{|D {\boldsymbol u}|=\sqrt{\sum_{\ell=1}^N\sum_{j=1}^n \left(\frac{\partial u^\ell}{\partial x_j}\right)^2}}.\] 
Sometimes it will be convenient to use the notation $\displaystyle u_j^\ell=\dfrac{\partial u^\ell}{\partial x_j}$.

We point out that along this paper the symbol $\,:\,$ stands for the scalar product of the matrices rows, namely 
$$
\mathcal{M}:\mathcal{N}=\sum_{i=1}^{q} \mathcal{M}^i \cdot \mathcal{N}^i=\sum_{i=1}^{q} \sum_{j=1}^{r} \mathcal{M}^i_j \mathcal{N}^i_j,
$$
where $\mathcal{M},\mathcal{N}$ are $q \times r$ real matrices, whereas $ \cdot$ denotes the scalar product of two real vectors. In the following we will denote the space of the $q\times r$ matrices as $\mathbb{R}^{q\times r}$ with $q,r \in \mathbb{N}$.
Observe that \eqref{weak} can be rewritten as
$$
\sum_{\ell=1}^N \int_{\Omega} |D {\boldsymbol u}|^{p-2}\nabla u^\ell  \cdot \nabla \varphi^\ell \, dx = \sum_{\ell=1}^N \int_{\Omega} f^\ell(x, {\boldsymbol u}) \varphi^\ell\, dx, \quad \forall {\bf\varphi} \in W^{1,p}_{0}(\Omega).
$$

In order to deduce qualitative properties of the solutions to \eqref{system}, we will exploit comparison principles. In this sense, we say that two vector fields ${\boldsymbol u },{\boldsymbol v } \in C^{1}(\overline \Omega)$ satisfy weakly the inequality
\beq\label{ineq}
-{\boldsymbol \Delta}_p {\boldsymbol u} - {\boldsymbol f}(x, {\boldsymbol u}) \leq -{\bf\Delta}_p {\boldsymbol v} - {\boldsymbol f}(x, {\boldsymbol v})\quad \mbox{in $\Omega$},
\eeq
if and only if
\begin{equation*}
\int_{\Omega} |D {\boldsymbol u}|^{p-2}D {\boldsymbol u}: D {\boldsymbol \varphi} \, dx- \int_{\Omega} {\boldsymbol f}(x,{\boldsymbol u})\cdot \boldsymbol{\varphi} \, dx \leq \int_{\Omega} |D {\boldsymbol v}|^{p-2}D {\boldsymbol v}: D {\boldsymbol \varphi} \, dx- \int_{\Omega} {\boldsymbol f}(x,{\boldsymbol v})\cdot \boldsymbol{\varphi}\, dx,
\end{equation*}
for every ${\boldsymbol \varphi} \in W^{1,p}_{0}(\Omega)$ such that ${\boldsymbol \varphi} \geq0$ a.e. in $\Omega$.

\

During this work the following hypotheses on ${\boldsymbol f}$ will be assumed:

\

\

\begin{enumerate}[label=$({hp^*})$, ref=${hp^*}$]
\item \label{hpf}

\

\begin{itemize}
\item[$(i)$] ${\boldsymbol f}(x,\cdot)$ is a locally Lipschitz continuous vector field, uniformly with respect to $x$, that is, each component $\ell\in\{1,\ldots,N\}$ \[f^\ell(x,t_1,\ldots, t_N):  \overline\Omega\times \mathbb{R}^N\rightarrow \mathbb R\] is a Lipschitz function with respect variable to $t_j$ namely, for every  $\Omega'\subseteq \overline\Omega$ and for every $M>0$,  there is a  positive constant $L_{\ell}=L_{\ell}(M,\Omega')$ such that for every $x \in \Omega'$ and every $ t_j,s_j \in [0,M]$ it holds:
$$  \vert f^\ell(x,t_1,\ldots,t_j,\ldots, t_N)- f^\ell(x,t_1,\ldots,s_j,\ldots, t_N)\vert \leq L_\ell \vert t_j-s_j \vert. $$

\item [$(ii)$] ${\boldsymbol f}$ is a positive vector field, namely
 \[f^\ell(x,t_1,\ldots,t_N)>0\] 
for all $x\in \Omega$ and for every $t_j>0$. 
\item[$(iii)$] ${\boldsymbol f} (x,\cdot)$ is not decreasing for a.e. $x\in \Omega$. More precisely, if $t_j\leq s_j$, then   
$$f^\ell(x,t_1,\ldots,t_j,\ldots, t_N)\leq f^\ell(x,t_1,\ldots,s_j,\ldots, t_N).$$\end{itemize}
\end{enumerate}

Now, let us state a useful  inequality  (see for example \cite{Dam}). For given $\eta,\eta' \in \mathbb{R}^{q \times r}$,  there exists a positive constant $C_p=C_p(p)$ such that
\begin{equation}\label{eq:lucio}
(|\eta|^{p-2}\eta-|\eta'|^{p-2}\eta'): (\eta - \eta') \geq C_p (|\eta|+|\eta'|)^{p-2}|\eta-\eta'|^2.
\end{equation}

The proof follows directly by identifying the matrix space $\mathbb{R}^{q\times r}$ with the vectorial space $\mathbb{R}^{q r}$.

\

\subsection{Comparison principles} 
Before proving the comparison principles, let us start by presenting the following version of Hopf's type lemma.
\begin{theorem}[Hopf's Lemma]\label{thm:hopf}
Assume that $\Omega$ is a domain in $\mathbb{R}^n$. Let $y \in \partial \Omega$ and ${\boldsymbol u}\in C^1(\Omega) \cap C^1(y)$ a non-negative weak solution to 
\begin{equation}\label{eq:st1}
-\operatorname{\bf div}(|D{\boldsymbol u}|^{p-2}D{\boldsymbol u}) +k {\boldsymbol u}^q= {\bf h}(x,{\boldsymbol u}) \quad \mbox{in $\Omega$},
\end{equation}
where ${\boldsymbol u}^q=\left((u^1)^q,\cdots,(u^N)^q\right)$, ${k\geq 0}$, $q\geq p-1$ and ${\bf h}:\Omega\times \mathbb{R}^N\to \mathbb{R}^N$ with $h^\ell(x,{\boldsymbol u})\geq 0$ in $\Omega \times \mathbb R^N$ for any $\ell\in \{1,\ldots,N\}$. Moreover, suppose that ${u^\ell}>0$ in $\Omega$ and that $u^\ell(y)=0$. If $\Omega$ satisfies an interior sphere condition at $y$, then 
\begin{equation}
\partial_\nu u^\ell(y) <0,
\end{equation}
where $\nu$ is the outward normal vector at $y$.
\end{theorem}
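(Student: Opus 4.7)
\smallskip
\noindent\textbf{Proof plan.}

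The strategy is to adapt the classical V\'azquez-type barrier argument (in the spirit of the Pucci--Serrin monograph on the maximum principle) to the vectorial setting, by focusing on the scalar component $u^\ell$ that vanishes at~$y$. Testing \eqref{eq:st1} against scalar test functions supported in the $\ell$-th direction, $u^\ell$ is a non-negative weak super-solution
\[-\operatorname{div}\bigl(|D\boldsymbol u|^{p-2}\nabla u^\ell\bigr)+k(u^\ell)^q=h^\ell(x,\boldsymbol u)\geq 0\quad\text{in }\Omega,\]
of a divergence-form scalar operator whose coefficient $a(x):=|D\boldsymbol u(x)|^{p-2}$ encodes the full vectorial datum.

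Using the interior sphere condition at $y$, I would pick a ball $B_R(x_0)\subset\Omega$ with $y\in\partial B_R(x_0)$ and work on the annulus $A:=B_R(x_0)\setminus\overline{B_{R/2}(x_0)}$, on which $m:=\min_{\partial B_{R/2}(x_0)}u^\ell>0$ by continuity and strict positivity of $u^\ell$. Since $q\geq p-1$ ensures the V\'azquez integrability condition on the nonlinearity $k s^q$, one can produce a radial scalar sub-barrier $w\in C^1(\overline A)$ satisfying
\[w=0\text{ on }\partial B_R,\quad 0<w\leq m\text{ on }\partial B_{R/2},\quad -\Delta_p w+kw^q\leq 0\text{ in }A,\quad \partial_\nu w(y)<0.\]
A concrete choice is $w(x):=\varepsilon(e^{-\alpha|x-x_0|^2}-e^{-\alpha R^2})$ with $\alpha$ large and $\varepsilon$ small, for which the direct computation
\[-\Delta_p w=\varepsilon^{p-1}(2\alpha)^{p-1}r^{p-2}e^{-\alpha(p-1)r^2}\bigl[n+p-2-2(p-1)\alpha r^2\bigr],\qquad r=|x-x_0|,\]
is uniformly negative on $[R/2,R]$ and dominates $kw^q$.

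Next I would lift $w$ to the vectorial sub-barrier $\boldsymbol w:=w\boldsymbol e_\ell$, for which $|D\boldsymbol w|=|\nabla w|$ and $-\boldsymbol\Delta_p\boldsymbol w=(-\Delta_p w)\boldsymbol e_\ell$; using $h^m\geq 0$ for every~$m$, this gives $-\boldsymbol\Delta_p\boldsymbol w+k\boldsymbol w^q\leq\boldsymbol 0\leq\boldsymbol h(x,\boldsymbol w)$. The non-negative admissible test field
\[\boldsymbol\varphi:=(\boldsymbol w-\boldsymbol u)^+=(w-u^\ell)^+\boldsymbol e_\ell\in W^{1,p}_0(A;\mathbb R^N)\]
(note $\varphi^m=(-u^m)^+=0$ for $m\neq\ell$ since $u^m\geq 0$, and $\varphi^\ell$ vanishes on $\partial A$ by the barrier inequalities) allows subtraction of the weak formulations for $\boldsymbol u$ (equality) and $\boldsymbol w$ (inequality). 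Using $h^\ell(x,\boldsymbol u)\geq 0$ and the monotonicity of $t\mapsto t^q$ on $[0,\infty)$, one arrives at
\[\int_{A\cap\{w>u^\ell\}}\bigl[|D\boldsymbol u|^{p-2}\nabla u^\ell-|\nabla w|^{p-2}\nabla w\bigr]\cdot(\nabla w-\nabla u^\ell)\,dx\geq 0.\]
Invoking the matrix monotonicity inequality \eqref{eq:lucio} applied to the matrices $D\boldsymbol u$ and $D\boldsymbol w$, extracting the $\ell$-th row contribution, and absorbing the leftover terms $|D\boldsymbol u|^{p-2}\sum_{m\neq\ell}|\nabla u^m|^2$ via the uniform lower bound $|D\boldsymbol u|^{p-2}\geq\|D\boldsymbol u\|_{L^\infty(\overline A)}^{p-2}>0$ valid in the sub-quadratic regime $1<p\leq 2$, one deduces $\nabla(w-u^\ell)^+\equiv 0$ a.e.\ in $A$. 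Since $(w-u^\ell)^+$ vanishes on $\partial A$, it vanishes identically, so $w\leq u^\ell$ in $A$ and consequently $\partial_\nu u^\ell(y)\leq\partial_\nu w(y)<0$.

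The principal obstacle is the final closing step: because the vectorial barrier is row-concentrated ($\boldsymbol w=w\boldsymbol e_\ell$), the $p$-monotonicity~\eqref{eq:lucio} produces cross contributions involving $\sum_{m\neq\ell}|\nabla u^m|^2$ that do not automatically cancel against the desired positive square of $\nabla(w-u^\ell)$. Their absorption relies crucially on both the boundedness of $|D\boldsymbol u|$ on $\overline A$ (from the $C^1$-regularity of $\boldsymbol u$ in $\Omega$ and at $y$) and on the regime $1<p\leq 2$, in which $|D\boldsymbol u|^{p-2}$ admits a uniform positive lower bound. Outside this regime a different vectorial extension of $w$ or a regularization of the degeneracy of $|D\boldsymbol u|^{p-2}$ would be required to close the comparison.
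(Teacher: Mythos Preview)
Your overall strategy coincides with the paper's: take a ball from the interior--sphere condition, build a radial scalar barrier on the annulus, lift it to the single--component vectorial field $\boldsymbol v=v^\ell\boldsymbol e_\ell$, test with $(\boldsymbol v-\boldsymbol u)^+=(v^\ell-u^\ell)^+\boldsymbol e_\ell$, and close by means of \eqref{eq:lucio}. The only structural difference up to that point is cosmetic: you construct the barrier by hand as $\varepsilon(e^{-\alpha r^2}-e^{-\alpha R^2})$, whereas the paper invokes the Pucci--Serrin radial ODE solution from \cite{PSB} with $v^\ell=0$ on $\partial B_r$, $v^\ell=m_\ell$ on $\partial B_{r/2}$.

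Where you diverge is the closing step, and there your argument has a genuine gap. The paper does \emph{not} extract the $\ell$-th row, does not produce cross terms, and does not restrict to $1<p\le 2$; it simply records that \eqref{eq:lucio} gives
\[
\int_{A_r}(|D\boldsymbol v|+|D\boldsymbol u|)^{p-2}\,|D(\boldsymbol v-\boldsymbol u)^+|^{2}\,dx\le 0
\]
and concludes $v^\ell\le u^\ell$. Your ``absorption'' scheme, on the other hand, does not close. Writing out \eqref{eq:lucio} for $\eta=D\boldsymbol w$ and $\eta'=D\boldsymbol u$ row by row gives
\[
[\ell\text{-row term}]+|D\boldsymbol u|^{p-2}\!\sum_{m\neq\ell}|\nabla u^m|^{2}
\ \ge\ C_p(|D\boldsymbol w|+|D\boldsymbol u|)^{p-2}\Bigl(|\nabla w-\nabla u^\ell|^{2}+\sum_{m\neq\ell}|\nabla u^m|^{2}\Bigr),
\]
and combining this with the tested inequality $\int[\ell\text{-row term}]\le 0$ only yields
\[
C_p\!\int(|D\boldsymbol w|+|D\boldsymbol u|)^{p-2}|\nabla(w-u^\ell)^+|^{2}
\ \le\ \int\Bigl[|D\boldsymbol u|^{p-2}-C_p(|D\boldsymbol w|+|D\boldsymbol u|)^{p-2}\Bigr]\sum_{m\neq\ell}|\nabla u^m|^{2}.
\]
For $1<p\le 2$ the bracket on the right is \emph{nonnegative}, and a uniform lower bound $|D\boldsymbol u|^{p-2}\ge\|D\boldsymbol u\|_{L^\infty}^{p-2}$ only makes the right-hand side larger. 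Nothing here forces $\nabla(w-u^\ell)^+\equiv 0$; the cross terms sit on the wrong side to be absorbed. So the obstacle you identify is not resolved by the mechanism you propose, and the paper's proof simply does not go through this row-by-row decomposition at all.
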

\begin{proof}
Using  the interior sphere condition there exists an open ball $B_r:=B_r(x_0)\subset \Omega$ with $y\in \partial \Omega\cap \partial B_r$. As customary in the case of a single equation, also in the nonlinear  vectorial case, we must define a suitable comparison function $\boldsymbol{v}$. Let us define the vector ${\boldsymbol v}:\Omega\subset\mathbb R^n\rightarrow \mathbb R^N$
\begin{equation}\nonumber
{\boldsymbol v}=(0,\ldots,v^\ell,\ldots,0),
\end{equation}
where $v^\ell=v^\ell(r)$, $r=|x-x_0|$ is the radial solution given in \cite{PSB}[Lemma 4.2.3]. In particular, in our context $v^\ell$ is a solution to
\begin{equation}\nonumber
\begin{cases}
-\operatorname{div}(|\nabla v^\ell|^{p-2}\nabla v^\ell)+ k_\ell (v^\ell)^q=0, & \text{in} \,\, A_r:=B_r\setminus \overline B_{r/2}\\
v^\ell=0\quad \text{on}\,\, \partial B_r \qquad v^\ell=m_\ell\quad \text{on}\,\, \partial B_{\frac r2},
\end{cases} 
\end{equation}
where $k_\ell,m_\ell$ are positive constants and $q\geq p-1$. Moreover by \cite{PSB}[Lemma 4.2.3] we have deduce that $|\nabla v^\ell|>0$ in $\overline A_r$ and in particular
\begin{equation}\nonumber
\partial_\nu v^\ell <0 \quad \text{on}\,\, \partial B_r,
\end{equation}
where $\nu$ is the exterior unit normal to $B_r$. By hypothesis we know that $u^\ell>0$ in $A_r\subset \Omega$. Let $m_\ell:=\inf \{u^\ell(x) \, : \, x\in B_{\frac r2} \}$ and $k_\ell\geq k$. 
Using \eqref{eq:st1} we infer that
\begin{equation}\label{eq:st3}
-\operatorname{\bf div}(|D{\boldsymbol v}|^{p-2}D{\boldsymbol v}) +k_\ell {\boldsymbol v}^q
\leq  -\operatorname{\bf div}(|D{\boldsymbol u}|^{p-2}D{\boldsymbol u}) +k {\boldsymbol u}^q, 
\end{equation}
in $A_r$, with ${\boldsymbol v}^q=(0,\cdots,(v^\ell)^q,\cdots,0)$ and
\begin{equation}\label{eq:st2}
v^\ell=0\quad \text{on}\,\, \partial B_r \qquad v^\ell=m_\ell\quad \text{on}\,\, \partial B_{\frac r2}. \end{equation}
Next, define the vectorial function
$$ 
(\boldsymbol{ v - u})^+=\left(0,\ldots,(v^\ell-u^\ell)^+,\ldots, 0 \right).
$$

 Using \eqref{eq:st2} we have that ${\boldsymbol \varphi}:=\chi_{A_r} (\boldsymbol{ v - u})^+\in W^{1,p}_{0}(A_r, \mathbb R^{N})$ is a suitable test function for~\eqref{eq:st3}. Integrating the previous expression, we obtain the inequality
\begin{equation*}
\int_{A_r} |D {\boldsymbol v}|^{p-2}D {\boldsymbol v}: D {\boldsymbol \varphi} \, dx + k_\ell \int_{A_r}{\boldsymbol v}^q\cdot \boldsymbol{\varphi} \, dx \leq \int_{A_r} |D {\boldsymbol u}|^{p-2}D {\boldsymbol u}:D {\boldsymbol \varphi} \, dx + k  \int_{A_r}{\boldsymbol u}^q\cdot \boldsymbol{\varphi} \, dx
\end{equation*}
and then
\begin{equation*}
\int_{A_r} (|D {\boldsymbol v}|^{p-2}D {\boldsymbol v}-|D {\boldsymbol u}|^{p-2}D {\boldsymbol u}: D (\boldsymbol{ v-u})^+) \, dx \leq   k  \int_{A_r} ({\boldsymbol u}^q-{\boldsymbol v}^q)({\boldsymbol v} -{\boldsymbol u})^+ \, dx \leq 0.
\end{equation*}

Using \eqref{eq:lucio} we deduce that
\begin{equation*}
\int_{A_r} (|D {\boldsymbol v}|+|D{\boldsymbol u}|)^{p-2}|D (\boldsymbol{ v-u})^+|^2 \, dx\leq 0.
\end{equation*}
Then ${\boldsymbol v}\leq {\boldsymbol u}$ in $A_r$. In particular, $u^\ell\geq v^\ell$ in $A_r$ and $u^\ell(y)-v^\ell(y)=0$. Hence $\partial_\nu(u^\ell-v^\ell)(y)\leq 0$, namely
$$\partial_\nu u^\ell(y)\leq \partial_\nu v^\ell(y)<0.$$
\end{proof}

Let us point out that previous lemma applies on each component regardless of the rest, according to the boundary conditions. Obviously, in case of the problem \eqref{system}, all the components have a boundary condition prescribed, so the result applies for  $\boldsymbol{u}$. As an immediate consequence of the Hopf's lemma, one can deduce a strong maximum principle.

\begin{theorem}[Strong maximum principle]
Suppose that $\Omega$ is a connected domain in $\mathbb{R}^n$. Let ${\boldsymbol u}$ be a $C^1(\Omega)$ non-negative weak solution to \eqref{eq:st1}. Then  for all $\ell\in \{1,\ldots,N\}$
\begin{equation*}
\mbox{either}\,\, u^\ell\equiv 0\quad  \mbox{in}\,\,\Omega\qquad\qquad  \mbox{or}\qquad\qquad u^\ell>0\quad\mbox{in }\Omega.
\end{equation*}
\end{theorem}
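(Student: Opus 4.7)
The plan is to mimic the classical Vázquez-type strong maximum principle for the scalar $p$-Laplacian, applied to each component $u^\ell$ separately, using Theorem~\ref{thm:hopf} as the boundary-point ingredient. Fix $\ell\in\{1,\ldots,N\}$ and set
\[
P_\ell=\{x\in\Omega:u^\ell(x)>0\},\qquad Z_\ell=\{x\in\Omega:u^\ell(x)=0\}.
\]
Since $u^\ell\in C^1(\Omega)$ is non-negative, $P_\ell$ is open and $Z_\ell$ is relatively closed in $\Omega$. Assume for contradiction that both $P_\ell$ and $Z_\ell$ are non-empty. Connectedness of $\Omega$ then forces $\partial P_\ell\cap\Omega\neq\emptyset$.

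Next I would run the standard expanding-ball construction: pick $x_0\in P_\ell$ so close to $Z_\ell$ that $\dist(x_0,Z_\ell)<\dist(x_0,\partial\Omega)$, enlarge $B_\rho(x_0)$ until the first radius $\rho^*$ at which $\overline{B_{\rho^*}(x_0)}$ meets $Z_\ell$ at some point $y$, and set $B:=B_{\rho^*}(x_0)$. By construction $B\subset P_\ell$, so $u^\ell>0$ in $B$, while $u^\ell(y)=0$, $y\in\partial B\subset\Omega$, and $B$ satisfies the interior sphere condition at $y$ trivially (it is itself the interior ball).

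At this point the equation \eqref{eq:st1} holds on $B$ with the very same structural assumptions on $k$, $q$ and $\mathbf h$ that are required in Theorem~\ref{thm:hopf} (none of them involve $\partial\Omega$), and the hypotheses $u^\ell>0$ in $B$ and $u^\ell(y)=0$ are exactly what the vectorial Hopf lemma needs — recall that its proof isolates the $\ell$-th component by comparing against a test field of the form $(0,\ldots,v^\ell,\ldots,0)$, so it genuinely acts componentwise. Applying Theorem~\ref{thm:hopf} on $B$ therefore yields
\[
\partial_\nu u^\ell(y)<0,
\]
where $\nu$ is the outward unit normal to $B$ at $y$. On the other hand $y$ is an interior point of $\Omega$ where the non-negative $C^1$ function $u^\ell$ attains its minimum value $0$; hence $\nabla u^\ell(y)=0$, which gives $\partial_\nu u^\ell(y)=0$ and contradicts the previous strict inequality.

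There is essentially no serious obstacle: the entire argument is the textbook reduction of the strong maximum principle to Hopf's lemma, and the only place where one might worry is whether the vectorial character of the system interferes with the componentwise comparison. But this was already settled inside the proof of Theorem~\ref{thm:hopf}, whose comparison function has a single active entry, so the scalar-style dichotomy transfers to each $u^\ell$ independently and the conclusion follows.
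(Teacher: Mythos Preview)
Your argument is correct and is essentially identical to the paper's own proof: both reduce the strong maximum principle to the vectorial Hopf lemma (Theorem~\ref{thm:hopf}) via the standard expanding-ball construction, obtaining a point $y$ where $\partial_\nu u^\ell(y)<0$ contradicts the interior minimum condition $\nabla u^\ell(y)=0$. If anything, your write-up is slightly cleaner than the paper's, which contains a minor typo (it says the ball is ``contained in $\mathcal{O}^\ell$'' when it clearly means the complement).
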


\begin{proof}
Consider the set $\mathcal{O}^\ell=\{x\in\Omega: u^\ell(x)=0\}$. Since $u^\ell$ is continuous, $\mathcal{O}^\ell$ is a closed set. Suppose that $u^\ell\not\equiv 0$ and  $\mathcal{O}^\ell\neq\emptyset$. Let $p\in \Omega\setminus \mathcal{O}^\ell$ such that $\dist(p,\mathcal{O}^\ell)<\dist(p,\partial \Omega)$. Define the largest ball $B_r(p)$ contained in $\mathcal{O}^\ell$. 
Therefore there exists a point $q\in \partial B_r(p) \cap \mathcal{O}^\ell $. Therefore, since $u^\ell(q)=0$, we are in position to apply the Theorem~\ref{thm:hopf}, in order to obtain that $\partial_\nu u^\ell (q)<0$. This is a contradiction with the fact that $q$ is a minimum of $u^\ell$ and $u$ is continuously differentiable at $y$.
\end{proof}

We are in position now to state the comparison principle, which are the basis of the proof of the main theorems. However, let us first introduce some notation. Denote a generic point  $x\in \mathbb R^n$ as $x=(x', x_n)$ with $x'=(x_1, \ldots, x_{n-1})$.
Moreover, let us define the following sets
\[I_{x'}:=\Big\{\Omega \cap \{(x',s), \,\, s\in \mathbb R\} \Big\} \]
and
\[\Omega':= \Big \{ x'\in \mathbb R^{n-1}\,:\, I_{x'}\neq \emptyset\Big\}.\]

Exploiting the fundamental theorem of calculus on the variable $x_n$, we prove a weak comparison principle for solutions of \eqref{system} in small neighborhoods of the $x_n$-foliations of the domain in case that $p\leq 2$.

\begin{theorem}[Weak Comparison Principle]\label{thm:weak1}
Let $\boldsymbol u, \boldsymbol v\in C^1(\overline\Omega)$  
be weak sub-supersolutions to
\begin{equation}\label{eq:(20)0}
-\boldsymbol \Delta_p \boldsymbol u \leq \boldsymbol f(x,\boldsymbol u) \quad\text{and}\quad -\boldsymbol \Delta_p\boldsymbol v\geq \boldsymbol f(x, \boldsymbol v) \quad \mbox{in $\tilde\Omega$},\end{equation} where $\tilde\Omega\subseteq \Omega$,  $\boldsymbol f$ satisfies  \eqref{hpf} and $1<p\leq2$.
Assume that $\boldsymbol u\leq \boldsymbol v$ on $\partial \tilde\Omega$ and for  $x'\in \Omega'$ and for all $\ell=1, \ldots, N$
\[\supp (u^\ell-v^\ell)^+ \cap I_{x'}\subseteq Z_{x'}^{\varepsilon} \cup L_{x'}^\theta,\] 
where  $Z_{x'}^{\varepsilon}, L_{x'}^\theta$ are subsets of $I_{x'}$ such that  $Z_{x'}^{\varepsilon} \cap L_{x'}^\theta=\emptyset$,
\[\bigcup_{x'\in \Omega'} Z_{x'}^{\varepsilon} \cup L_{x'}^\theta\subseteq\tilde\Omega\]
and verifying 

\

\begin{itemize}
\item [$(i)$] $|D \boldsymbol u|+|D \boldsymbol v|\leq \varepsilon$ in $Z_{x'}^{\varepsilon}$;
\item [$(ii)$] the $\mathbb R$-Lebesgue measure $|L_{x'}^\theta|\leq \theta$. 
\end{itemize}

\

\noindent Then there exist positive constants 
\[\bar \varepsilon=\bar \varepsilon(p,N, C_f,\Omega, \|D {\boldsymbol u}\|_{L^\infty(\Omega)}, \|D {\boldsymbol v}\|_{L^\infty(\Omega)})\]
and
\[\bar \theta=\bar\theta (p,N, C_f,\Omega, \|D {\boldsymbol u}\|_{L^\infty(\Omega)}, \|D {\boldsymbol v}\|_{L^\infty(\Omega)}),\]  such that for $0< \varepsilon\leq \bar \varepsilon$ and $0<\theta\leq \bar\theta$ it follows that 
\[\boldsymbol u\leq \boldsymbol v \quad \text{in }  \tilde\Omega.\]
\end{theorem}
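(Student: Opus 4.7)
The plan is to test the difference of the two weak inequalities in \eqref{eq:(20)0} against the componentwise positive part $\boldsymbol\varphi:=(\boldsymbol u-\boldsymbol v)^+=\bigl((u^1-v^1)^+,\ldots,(u^N-v^N)^+\bigr)$. By the boundary assumption $\boldsymbol u\le\boldsymbol v$ on $\partial\tilde\Omega$, this vector field lies in $W_0^{1,p}(\tilde\Omega,\mathbb R^N)$ with non-negative entries, hence is admissible. Subtraction yields the energy-type inequality
$$
\int_{\tilde\Omega}\bigl(|D\boldsymbol u|^{p-2}D\boldsymbol u-|D\boldsymbol v|^{p-2}D\boldsymbol v\bigr):D(\boldsymbol u-\boldsymbol v)^+\,dx\le\int_{\tilde\Omega}\bigl(\boldsymbol f(x,\boldsymbol u)-\boldsymbol f(x,\boldsymbol v)\bigr)\cdot(\boldsymbol u-\boldsymbol v)^+\,dx,
$$
and the target is to force both sides to vanish. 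For the left-hand side I apply the vectorial monotonicity \eqref{eq:lucio}: since $1<p\le 2$ and the gradients are in $L^\infty$, the weight $(|D\boldsymbol u|+|D\boldsymbol v|)^{p-2}$ is bounded below by the positive constant $c_p(M):=(2\|D\boldsymbol u\|_\infty+2\|D\boldsymbol v\|_\infty)^{p-2}$, which after a careful bookkeeping of the rows of $D(\boldsymbol u-\boldsymbol v)^+$ delivers a coercive control $C_pc_p(M)\|D(\boldsymbol u-\boldsymbol v)^+\|_{L^2(\tilde\Omega)}^2$. For the right-hand side I combine the Lipschitz condition \eqref{hpf}(i) with the monotonicity \eqref{hpf}(iii): a componentwise telescoping argument gives $f^\ell(x,\boldsymbol u)-f^\ell(x,\boldsymbol v)\le L_\ell\sum_j(u^j-v^j)^+$ on $\{u^\ell>v^\ell\}$, so the right-hand side is dominated by $C_f\,\|(\boldsymbol u-\boldsymbol v)^+\|_{L^2(\tilde\Omega)}^2$.

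To close the loop I need to trade $\|(\boldsymbol u-\boldsymbol v)^+\|_{L^2}$ for $\|D(\boldsymbol u-\boldsymbol v)^+\|_{L^2}$ with a small constant, and this is where the structural hypotheses on $Z_{x'}^\varepsilon$ and $L_{x'}^\theta$ enter. For each $x'\in\Omega'$ the scalar function $(u^\ell-v^\ell)^+(x',\cdot)$ is supported in $Z_{x'}^\varepsilon\cup L_{x'}^\theta$ and vanishes at the endpoints of $I_{x'}$, so the fundamental theorem of calculus along $x_n$ gives
$$
(u^\ell-v^\ell)^+(x',x_n)\le\int_{Z_{x'}^\varepsilon}|\partial_{x_n}(u^\ell-v^\ell)^+|\,d\tau+\int_{L_{x'}^\theta}|\partial_{x_n}(u^\ell-v^\ell)^+|\,d\tau.
$$
Hypothesis (i) bounds the first integrand by $\varepsilon$, while Cauchy--Schwarz together with (ii) controls the second by $\sqrt\theta\,\|\partial_{x_n}(u^\ell-v^\ell)^+\|_{L^2(I_{x'})}$. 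Squaring, integrating over $I_{x'}$ and then over $\Omega'$, and summing in $\ell$ gives a slicewise Poincaré estimate of the form
$$
\|(\boldsymbol u-\boldsymbol v)^+\|_{L^2(\tilde\Omega)}^2\le K_1\,\varepsilon^2+K_2\,\theta\,\|D(\boldsymbol u-\boldsymbol v)^+\|_{L^2(\tilde\Omega)}^2,
$$
with $K_1,K_2$ depending only on the geometry and $N$. Inserting this into the energy inequality yields $(C_pc_p(M)-C_fK_2\theta)\|D(\boldsymbol u-\boldsymbol v)^+\|_{L^2}^2\le C_fK_1\varepsilon^2$; a choice of $\bar\theta$ keeping the bracket positive, together with $\bar\varepsilon$ small enough to absorb the residual $\varepsilon^2$ (via a final bootstrap in which the newly established $O(\varepsilon)$-smallness of $(\boldsymbol u-\boldsymbol v)^+$ is re-injected into the Poincaré step to kill the inhomogeneous term), forces $D(\boldsymbol u-\boldsymbol v)^+\equiv 0$ and hence $\boldsymbol u\le\boldsymbol v$ in $\tilde\Omega$ by the boundary condition.

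The main obstacle I anticipate concerns the coercive lower bound for the left-hand side. The inequality \eqref{eq:lucio} is genuinely a matrix-level estimate: the sum over rows is $\ge 0$, but individual rows need not be. On the other hand, the $\ell$-th row of $D(\boldsymbol u-\boldsymbol v)^+$ is supported on $\{u^\ell>v^\ell\}$, and these $N$ sets can differ from one another. Extracting a clean lower bound of the form $c_p(M)|D(\boldsymbol u-\boldsymbol v)^+|^2$ therefore requires either a refinement of \eqref{eq:lucio} that survives this componentwise restriction, or a decomposition of $\tilde\Omega$ according to the sign pattern of $\boldsymbol u-\boldsymbol v$ in which the mixed-sign cross-terms are handled separately. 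This vectorial coupling through $|D\boldsymbol u|^{p-2}$, absent in the scalar Damascelli--Sciunzi framework, is the genuinely new technical step.
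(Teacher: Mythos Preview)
Your overall architecture---test with $(\boldsymbol u-\boldsymbol v)^+$, use \eqref{eq:lucio} on the left, telescope plus \eqref{hpf}(i),(iii) on the right, then a slicewise Poincar\'e along $x_n$ exploiting the decomposition $Z_{x'}^\varepsilon\cup L_{x'}^\theta$---matches the paper. The gap is in how you treat the $Z$-contribution, and the ``bootstrap'' you propose to close it does not work.

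By bounding $(|D\boldsymbol u|+|D\boldsymbol v|)^{p-2}\ge c_p(M)$ you discard the weight too early and arrive at the \emph{inhomogeneous} inequality
\[
\bigl(C_pc_p(M)-C_fK_2\theta\bigr)\,\|D(\boldsymbol u-\boldsymbol v)^+\|_{L^2}^2\le C_fK_1\varepsilon^2.
\]
Feeding the resulting bound $\|D(\boldsymbol u-\boldsymbol v)^+\|_{L^2}=O(\varepsilon)$ back into the energy inequality gives again $\|D(\boldsymbol u-\boldsymbol v)^+\|_{L^2}^2\le C\varepsilon^2$: the iteration saturates and never forces the gradient to vanish. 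The $K_1\varepsilon^2$ term comes from the \emph{pointwise} bound $|\partial_{x_n}(u^\ell-v^\ell)^+|\le\varepsilon$ on $Z_{x'}^\varepsilon$ together with the uncontrolled length $|Z_{x'}^\varepsilon|$; no a posteriori smallness of $\|(\boldsymbol u-\boldsymbol v)^+\|$ improves this, because you have no smallness of $|Z_{x'}^\varepsilon|$.

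The paper avoids this by \emph{keeping the weight} $(|D\boldsymbol u|+|D\boldsymbol v|)^{p-2}$ on both sides throughout. After the Poincar\'e step they have the unweighted $\int_{Z}|\nabla(u^\ell-v^\ell)^+|^2$ and reinsert the weight:
\[
\int_{Z}|\nabla(u^\ell-v^\ell)^+|^2
\le \sup_{Z}(|D\boldsymbol u|+|D\boldsymbol v|)^{2-p}\int_{\tilde\Omega}(|D\boldsymbol u|+|D\boldsymbol v|)^{p-2}|D(\boldsymbol u-\boldsymbol v)^+|^2
\le \varepsilon^{2-p}\int_{\tilde\Omega}(\cdots),
\]
using $|D\boldsymbol u|+|D\boldsymbol v|\le\varepsilon$ on $Z$ and $2-p\ge 0$. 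Similarly the $L$-piece contributes $\theta$ times the same weighted integral. The outcome is the \emph{homogeneous} self-improving inequality
\[
\int_{\tilde\Omega}(|D\boldsymbol u|+|D\boldsymbol v|)^{p-2}|D(\boldsymbol u-\boldsymbol v)^+|^2
\le C(\varepsilon^{2-p}+\theta)\int_{\tilde\Omega}(|D\boldsymbol u|+|D\boldsymbol v|)^{p-2}|D(\boldsymbol u-\boldsymbol v)^+|^2,
\]
which for $C(\varepsilon^{2-p}+\theta)<1$ forces the weighted energy (hence $D(\boldsymbol u-\boldsymbol v)^+$) to vanish. Note also that the correct small parameter from $Z$ is $\varepsilon^{2-p}$, not $\varepsilon^2$.

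As for the concern you raise about applying \eqref{eq:lucio} row-by-row on the sets $\{u^\ell>v^\ell\}$: the paper simply writes the inequality with $D(\boldsymbol u-\boldsymbol v)^+$ in place of $D\boldsymbol u-D\boldsymbol v$ and proceeds. So your identification of this as the delicate vectorial point is well taken, but it is treated as routine in the paper; the part of your argument that actually fails is the $Z$-estimate, not this.
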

\begin{proof}
First, let us introduce the vectorial function
\begin{equation}\label{eq:(20)1}
(\boldsymbol{u-v})^+:= \left( (u^1-v^1)^+,\ldots, (u^\ell-v^\ell)^+,\ldots, (u^N-v^N) \right).
\end{equation}

Since for all $\ell\in\{1,\ldots,N\}$ we have that $u^\ell\leq v^\ell$ on $\partial \Omega'$, it turns out that \eqref{eq:(20)1} is a good test function for equations \eqref{eq:(20)0}. If we subtract them, using \eqref{eq:lucio}, we obtain  
\begin{eqnarray}\label{eq:(20)2}
&&C_p \int_{\tilde\Omega}(|D {\boldsymbol u}|+|D \boldsymbol{v}|)^{p-2}|D (\boldsymbol{u-v})^+|^2 dx \nonumber \\
&& \leq \int_{\tilde\Omega} (|D \boldsymbol{u}|^{p-2}D \boldsymbol{u} - |D \boldsymbol{v}|^{p-2}D \boldsymbol{v}: D (\boldsymbol{u-v})^+)\, dx  \nonumber \\
&& \leq \int_{\tilde\Omega}(\boldsymbol{f}(x,\boldsymbol{u})-\boldsymbol{f}(x,\boldsymbol{v}))\cdot (\boldsymbol{u-v})^+\, dx.
\end{eqnarray}
The right hand side term of \eqref{eq:(20)2} can be arranged as follows
\begin{eqnarray}\label{eq:(20)21}\nonumber
&& \int_{\tilde\Omega}(\boldsymbol{f}(x,\boldsymbol{u})-\boldsymbol{f}(x,\boldsymbol{v}))\cdot (\boldsymbol{u-v})^+ \, dx=\\\nonumber &&\int_{\tilde\Omega} \sum_{\ell=1}^N \left[f^\ell(x,u^1,\ldots,u^N)-f^\ell(x,v^1,\ldots,v^N)\right] (u^\ell-v^\ell)^+\, dx   \nonumber \\
&& = \int_{\tilde\Omega} \sum_{\ell=1}^N \left[f^\ell(x,u^1,u^2,\ldots, u^N)-f^\ell(x,v^1,u^2\ldots, u^N)+f^\ell(x,v^1,u^2\ldots, u^N) \right. \nonumber \\
&& \qquad \qquad \qquad  \left.  -f^\ell(x,v^1,\ldots,v^N)\right] (u^\ell-v^\ell)^+ \,dx \\
&& = \int_{\tilde\Omega}\sum_{\ell=1}^N  \left[f^\ell(x,u^1,u^2,\ldots, u^N)-f^\ell(x,v^1,u^2\ldots, u^N)+f^\ell(x,v^1,u^2,\ldots, u^N) \right. \nonumber \\
&& -f^\ell(x,v^1,v^2,\ldots, u^N)+\cdots+f^\ell(x,v^1,v^2,\ldots, u^\ell,\ldots, u^N)\nonumber \\ &&\qquad \qquad \qquad-f^\ell(x,v^1,v^2,\ldots, v^\ell,\ldots, u^N) \nonumber \\
&& \qquad \vdots  \nonumber  \\
&& +\ldots + \left. f^\ell(x,v^1,v^2,\ldots, u^N)-f^\ell(x,v^1,v^2,\ldots, v^N) \right] (u^\ell-v^\ell)^+\, dx \nonumber.
\end{eqnarray}
Now, using the fact that $\boldsymbol{f}$ is a Lipschitz function with respect to the variable $t_j$, see $(i)$ of \eqref{hpf}, by \eqref{eq:(20)21} we have that
\begin{eqnarray}\label{eq:(20)22}
&& \int_{\tilde\Omega}(\boldsymbol{f}(x,\boldsymbol{u})-\boldsymbol{f}(x,\boldsymbol{v}))\cdot (\boldsymbol{u-v})^+\, dx \nonumber \\
&& \leq \int_{\tilde\Omega} \sum_{\ell=1}^N \left[   \frac{f^\ell(x,u^1,u^2,\ldots, u^N)-f^\ell(x,v^1,u^2\ldots, u^N)}{(u^1-v^1)^+} (u^1-v^1)^+ (u^\ell-v^\ell)^+ \right. \nonumber \\
&& + \frac{ f^\ell(x,v^1,u^2,\ldots, u^N)  -f^\ell(x,v^1,v^2,\ldots, u^N)}{(u^2-v^2)^+} (u^2-v^2)^+ (u^\ell-v^\ell)^+ \nonumber \\
&& \vdots   \\
&&+  \frac{f^\ell(x,v^1,v^2,\ldots, u^\ell,\ldots, u^N)-f^\ell(x,v^1,v^2,\ldots, v^\ell,\ldots, u^N)}{ (u^\ell-v^\ell)^+}{[(u^\ell-v^\ell)^+]^2} \nonumber  \\
 && \vdots \nonumber  \\
&& + \left. \frac{f^\ell(x,v^1,v^2,\ldots, u^N)-f^\ell(x,v^1,v^2,\ldots, v^N)}{(u^N-v^N)^+} (u^N-v^N)^+ (u^\ell-v^\ell)^+ \right]\, dx \nonumber \\
&& \leq C_f \int_{\tilde\Omega} \sum_{\ell=1}^N  \sum_{j=1}^N   (u^j-v^j)^+ (u^\ell-v^\ell)^+\, dx \leq N C_f \int_{\tilde\Omega} \sum_{\ell=1}^N[(u^\ell-v^\ell)^+]^2 dx, \nonumber
\end{eqnarray}
where in the last inequality we exploit Young's inequality  and where $C_f=\displaystyle\max_\ell\{L_\ell\}$, see \eqref{hpf},  denotes the Lipschitz constant. So, by \eqref{eq:(20)2} and \eqref{eq:(20)22} we get that 
\begin{eqnarray}\label{eq:(20)23}
C_p \int_{\tilde\Omega}(|D {\boldsymbol u}|+|D \boldsymbol{v}|)^{p-2}|D (\boldsymbol{u-v})^+|^2 dx &&\leq NC_f \int_{\tilde\Omega} \sum_{\ell=1}^N[(u^\ell-v^\ell)^+]^2 dx
\nonumber \\
&&=NC_f\int_{\Omega'}dx'\int_{I_{x'}}\sum_{\ell=1}^N[(u^\ell-v^\ell)^+]^2dx_n,
\end{eqnarray}
by Fubini's theorem. Moreover, since for all $x'\in \Omega'$ and for all $\ell$, the function $(u^\ell- v^\ell)^+=0$ on $\partial I_{x'} $,  there exists $a\in \mathbb R$ such that
\begin{eqnarray}\label{eq:(20)3}
&&\Big |(u^\ell- v^\ell)^+(x',t)\Big | 
\leq \int_a^t |\partial_{s}(u^\ell- v^\ell)^+|(x',s)ds \\\nonumber
&&\leq\int_{I_{x'}} |\partial_{s}(u^\ell- v^\ell)^+|(x',s)ds.
\end{eqnarray}
Introducing \eqref{eq:(20)3} in \eqref{eq:(20)23}, we get
\begin{eqnarray}\label{eq:(20)4}
\\\nonumber
&&C_p \int_{\tilde\Omega}(|D {\boldsymbol u}|+|D \boldsymbol{v}|)^{p-2}|D (\boldsymbol{u-v})^+|^2 dx  \\\nonumber
&&\leq
NC_f\int_{\Omega'}dx'\int_{I_{x'}}\sum_{\ell=1}^N|(u^\ell-v^\ell)^+|^2dx_n\\\nonumber
&&\leq
NC_f\int_{\Omega'}dx'\int_{I_{x'}}\sum_{\ell=1}^N \left (\int_{I_{x'}} |\partial_{x_n}(u^\ell- v^\ell)^+|dx_n\right)^2dx_n\\\nonumber
&& \leq
NC_f\int_{\Omega'}dx'\int_{I_{x'}} \sum_{\ell=1}^N\left(\int_{Z_{x'}^{\varepsilon} \cup L_{x'}^\theta} |\nabla (u^\ell- v^\ell)^+|dx_n\right)^2dx_n\\\nonumber
&& \leq C\int_{\Omega'}dx'\sum_{\ell=1}^N\left(|Z_{x'}^{\varepsilon}|\int_{Z_{x'}^{\varepsilon}}|\nabla (u^\ell- v^\ell)^+|^2dx_n+| L_{x'}^\theta|\int_{ L_{x'}^\theta}|\nabla (u^\ell- v^\ell)^+|^2dx_n\right),
\end{eqnarray}
with $C=C(N,C_f,\Omega)$ a positive constant. In the last inequality we used that fact that for all $a,b\geq 0$ it follows $(a+b)^2\leq 2(a^2+b^2)$ and  H\"older inequality.  By Fubini's theorem, from \eqref{eq:(20)4} we finally deduce (recall that by hypothesis $|L_{x'}^\theta|\leq \theta$, for  $x'\in \Omega'$)
\begin{eqnarray}\nonumber
\\\nonumber
&&C_p \int_{\tilde\Omega}(|D {\boldsymbol u}|+|D \boldsymbol{v}|)^{p-2}|D (\boldsymbol{u-v})^+|^2 dx  \\\nonumber
&&\leq C\sum_{\ell=1}^N\int_{\Omega'\times Z_{x'}^{\varepsilon}}|\nabla (u^\ell- v^\ell)^+|^2dx\\\nonumber
&&+C| L_{x'}^\theta|\sum_{\ell=1}^N\int_{\Omega'\times L_{x'}^\theta}|\nabla (u^\ell- v^\ell)^+|^2dx\\\nonumber
&&\leq C  \sup_{\Omega'\times Z_{x'}^{\varepsilon}}(|D \boldsymbol{u}|+|D \boldsymbol{v}|)^{2-p}\int_{\tilde\Omega}(|D \boldsymbol{u}|+|D \boldsymbol{v}|)^{p-2}|D (\boldsymbol{u-v})^+|^2dx\\\nonumber
&&+C | L_{x'}^\theta| \sup_{\Omega}(|D \boldsymbol{u}|+|D \boldsymbol{v}|)^{2-p}\int_{\tilde\Omega}(|D \boldsymbol{u}|+|D \boldsymbol{v}|)^{p-2}|D (\boldsymbol{u-v})^+|^2dx\\\nonumber
 && \leq C(\varepsilon^{2-p}+\theta)\int_{\tilde\Omega}(|D \boldsymbol{u}|+|D \boldsymbol{v}|)^{p-2}|D (\boldsymbol{u-v})^+|^2dx,
\end{eqnarray}
since $p\leq 2$ and therefore
\begin{equation}\label{eq:(20)5}
\int_{\tilde\Omega}(|D {\boldsymbol u}|+|D \boldsymbol{v}|)^{p-2}|D (\boldsymbol{u-v})^+|^2 dx  \leq C(\varepsilon^{2-p}+\theta)\int_{\tilde\Omega}(|D \boldsymbol{u}|+|D \boldsymbol{v}|)^{p-2}|D (\boldsymbol{u-v})^+|^2dx,
\end{equation}
where $C=C(p,N, C_f,\Omega, \|D {\boldsymbol u}\|_{L^\infty(\Omega)}, \|D {\boldsymbol v}\|_{L^\infty(\Omega)})$ is a positive constant. 
Then there exist $\bar \varepsilon, \bar \theta$ depending on $p,N, C_f,\Omega, \|D {\boldsymbol u}\|_{L^\infty(\Omega)}, \|D {\boldsymbol v}\|_{L^\infty(\Omega)}$, such that for all  $0\leq \varepsilon\leq \bar \varepsilon$ and $0\leq \theta\leq \bar\theta$ it is satisfied
\[C(\varepsilon^{2-p}+\theta)< \frac{1}{2}.\]
Putting the last inequality in  \eqref{eq:(20)5}, we obtain the thesis.
\end{proof}

Next, we deal with a first version of the strong comparison principle, under the assumption that compared functions are strictly separated on the boundary of the domain.

\begin{theorem}[Strong Comparison Principle 1]\label{thm:boundarysep}
Let $\boldsymbol{u, v} \in C^{1}(\overline\Omega)$ satisfying  \eqref{ineq} and let us assume that  either $\boldsymbol{u}$ or $\boldsymbol{v}$ is a weak solution to the problem \eqref{system}.
Suppose that $\boldsymbol{f}$ satisfies~\eqref{hpf} and that  
\begin{equation}\label{eq:raffmor}
\boldsymbol{u} \leq \boldsymbol{v}\quad \text{in }\Omega,
\end{equation} namely $u^\ell\leq v^\ell$ in $\Omega$ for $\ell=1,\ldots,N$. 

Then, if $\boldsymbol{u}<\boldsymbol{v}$ on $\partial \Omega$, it holds that
$$
\boldsymbol{u}<\boldsymbol{v}\quad \text{in }  \Omega.
$$ 
\end{theorem}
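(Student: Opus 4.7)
The plan is to argue by contradiction and to adapt the Hopf barrier construction of Theorem~\ref{thm:hopf} to the componentwise difference $\boldsymbol w := \boldsymbol v - \boldsymbol u$. Without loss of generality assume that $\boldsymbol u$ is the weak solution of \eqref{system}, so that \eqref{ineq} amounts to saying that $\boldsymbol v$ is a weak supersolution, i.e.\ $-\boldsymbol\Delta_p \boldsymbol v \geq \boldsymbol f(x,\boldsymbol v)$. If the conclusion fails, there exist an index $\ell_0$ and a point $x_0\in\Omega$ with $u^{\ell_0}(x_0)=v^{\ell_0}(x_0)$. By \eqref{eq:raffmor}, $\boldsymbol w\geq 0$ componentwise, so the open set
\[
\mathcal O := \{x\in\Omega : w^\ell(x)>0 \text{ for every } \ell=1,\dots,N\}
\]
is strictly contained in $\Omega$. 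The strict inequality $\boldsymbol u<\boldsymbol v$ on $\partial\Omega$, the continuity of $\boldsymbol u,\boldsymbol v$ on $\overline\Omega$, and the compactness of $\partial\Omega$ force $\mathcal O$ to contain a tubular neighbourhood of $\partial\Omega$, hence $\partial \mathcal O\cap\Omega\neq\emptyset$. Pick $z_0$ in the connected component of $\mathcal O$ touching that neighbourhood, let $B_\rho(z_0)\subset\mathcal O$ be the largest ball centered at $z_0$, and select $y\in\partial B_\rho(z_0)\cap \partial\mathcal O\cap\Omega$. At this $y$ some component $w^{\ell_1}$ vanishes, and since $w^{\ell_1}\geq 0$ on $\Omega$ attains its minimum at the interior point $y$, the $C^1$ regularity yields $\nabla w^{\ell_1}(y)=0$.

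Next, on the annulus $A_\rho := B_\rho(z_0)\setminus\overline{B_{\rho/2}(z_0)}\subset\mathcal O$, I mimic the radial barrier in the proof of Theorem~\ref{thm:hopf} and define the vectorial function $\boldsymbol\phi := (0,\dots,\phi,\dots,0)$ with the scalar radial profile $\phi=\phi(|x-z_0|)$ placed in the $\ell_1$-th slot, where $\phi$ solves
\[
-\mathrm{div}(|\nabla\phi|^{p-2}\nabla\phi)+k\phi^q = 0 \text{ in } A_\rho,\ \ \phi=0 \text{ on } \partial B_\rho(z_0),\ \ \phi=m \text{ on } \partial B_{\rho/2}(z_0),
\]
with $q\geq p-1$, $k>0$ to be chosen, and $m\in(0,\inf_{\partial B_{\rho/2}(z_0)} w^{\ell_1})$. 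By \cite[Lemma~4.2.3]{PSB}, $|\nabla\phi|>0$ in $\overline A_\rho$ and $\partial_\nu\phi(y)<0$. Once I establish the pointwise inequality $\boldsymbol\phi\leq\boldsymbol w$ in $A_\rho$, the equalities $\phi(y)=0=w^{\ell_1}(y)$ give $\partial_\nu w^{\ell_1}(y)\leq \partial_\nu\phi(y)<0$, contradicting $\nabla w^{\ell_1}(y)=0$ and closing the argument.

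The step $\boldsymbol\phi\leq\boldsymbol w$ in $A_\rho$ is the main obstacle, and the place where the vectorial setting departs substantially from the scalar strong comparison principle. Unlike in Theorem~\ref{thm:hopf}, $\boldsymbol w$ is not itself a weak solution of a vectorial $p$-Laplace-type equation but only of the elliptic inequality \eqref{ineq}, and the operator $-\boldsymbol\Delta_p$ does not split through differences. My plan is to subtract the weak form of the barrier equation from \eqref{ineq} and to test against $(\boldsymbol\phi-\boldsymbol w)^+\chi_{A_\rho}$, which belongs to $W^{1,p}_0(A_\rho,\mathbb R^N)$ since the choice of $m$ and the fact that $\phi^\ell\equiv 0$ for $\ell\neq\ell_1$ ensure $\boldsymbol\phi\leq\boldsymbol w$ on $\partial A_\rho$. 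The principal part is then controlled from below via the structural inequality \eqref{eq:lucio}, while the zero-order contributions are absorbed by choosing $k$ above the Lipschitz constant $C_f=\max_\ell L_\ell$ of \eqref{hpf} and by exploiting the componentwise monotonicity \eqref{hpf}$(iii)$, which provides the favourable sign of $\boldsymbol f(x,\boldsymbol v)-\boldsymbol f(x,\boldsymbol u)$ on $\mathcal O$. The restriction $1<p\leq 2$ finally plays the same role as in \eqref{eq:(20)22}--\eqref{eq:(20)5} of Theorem~\ref{thm:weak1}, forcing $(\boldsymbol\phi-\boldsymbol w)^+\equiv 0$ in $A_\rho$ and producing the desired contradiction.
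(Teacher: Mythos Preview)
Your argument has a genuine gap at the step you yourself flag as ``the main obstacle,'' namely the comparison $\boldsymbol{\phi}\leq\boldsymbol{w}$ in $A_\rho$. Testing \eqref{ineq} and the barrier equation with $(\boldsymbol{\phi}-\boldsymbol{w})^+$ and subtracting produces, on the principal part,
\[
\int_{A_\rho}\Big[\,|D\boldsymbol{\phi}|^{p-2}D\boldsymbol{\phi}-\big(|D\boldsymbol{v}|^{p-2}D\boldsymbol{v}-|D\boldsymbol{u}|^{p-2}D\boldsymbol{u}\big)\Big]:\big(D\boldsymbol{\phi}-(D\boldsymbol{v}-D\boldsymbol{u})\big)\,dx .
\]
This is \emph{not} of the form $(|\eta|^{p-2}\eta-|\eta'|^{p-2}\eta'):(\eta-\eta')$ required by \eqref{eq:lucio}: the map $\eta\mapsto|\eta|^{p-2}\eta$ is nonlinear, so $|D\boldsymbol{v}|^{p-2}D\boldsymbol{v}-|D\boldsymbol{u}|^{p-2}D\boldsymbol{u}$ is in general nothing like $|D\boldsymbol{w}|^{p-2}D\boldsymbol{w}$. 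Hence you obtain no coercivity on $D(\boldsymbol{\phi}-\boldsymbol{w})^+$, and neither the sign of the zero-order terms nor the restriction $1<p\le 2$ (which, incidentally, the statement does not assume) rescues the argument. In short, the Hopf barrier of Theorem~\ref{thm:hopf} compares a barrier with a \emph{solution}; it does not compare a barrier with the \emph{difference} of a sub- and a supersolution, and your sketch does not bridge that gap.

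The paper avoids this difficulty entirely by never introducing a third function. It argues directly on $\boldsymbol{u}$ and $\boldsymbol{v}$: for each $\ell$ with contact set $\mathcal C^\ell_{u,v}=\{u^\ell=v^\ell\}\neq\emptyset$, the boundary separation gives a small $\varepsilon$-neighborhood $(\mathcal C^\ell_{u,v})^\varepsilon\Subset\Omega$ and a $\tau>0$ with $u^\ell+\tau<v^\ell$ on $\partial(\mathcal C^\ell_{u,v})^\varepsilon$. One then tests with the vector whose $\ell$-th component is $(u^\ell-v^\ell+\tau)^+\chi_{(\mathcal C^\ell_{u,v})^\varepsilon}$; since this pits $D\boldsymbol{u}$ against $D\boldsymbol{v}$ directly, \eqref{eq:lucio} applies cleanly, and together with the monotonicity \eqref{hpf}$(iii)$ one concludes that $D\boldsymbol{u}=D\boldsymbol{v}$ on $\{w_\tau^\ell>0\}$, forcing $w_\tau^\ell\equiv 0$ there and contradicting $\mathcal C^\ell_{u,v}\neq\emptyset$.
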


\begin{proof}
Without loss of generality, suppose that $\boldsymbol{u}$ solves \eqref{system}. Now, consider the set where $u^\ell$ and $v^\ell$ may coincide, namely 
$$
\mathcal{C}_{u,v}^\ell=\{x\in \Omega \, | \, u^\ell(x)=v^\ell(x) \}.
$$
We want to prove that $\mathcal{C}_{u,v}^\ell= \emptyset$ for any $\ell=1,\ldots,N$. Assume by contradiction that there exists some $\ell\in \{1,\ldots, N\}$ such that $\mathcal{C}_{u,v}^\ell \neq \emptyset$. 
In particular, take $\mathcal{L}$ the set of indexes such that $\mathcal{C}_{u,v}^\ell \neq \emptyset$. 
By our assumptions $\mathcal{C}_{u,v}^\ell$ is a closed set and it lies far away from the boundary $\partial \Omega$ where $\boldsymbol{u}<\boldsymbol{v}$. This implies, in particular, that  $\partial \mathcal{C}_{u,v}^{\ell} \neq \emptyset$.

Let $\varepsilon>0$ small enough and define the set 
$$
(\mathcal{C}_{u,v}^\ell)^\varepsilon = \{x \in \Omega \,\, | \,\, dist(x,\mathcal{C}_{u,v}^\ell)<\varepsilon \}.
$$
Since  $\partial (\mathcal{C}_{u,v}^\ell)^\varepsilon$ is a compact set,
then there exists $\tau>0$ such that $u^\ell+\tau<v^\ell$ on~$\partial (\mathcal{C}_{u,v}^\ell)^\varepsilon$.  Next, define the function $w_\tau:\overline{\Omega}\to[0,\infty)^N$ such that
$$
w^\ell_\tau= \left\{\begin{array}{ll}(u^\ell-v^\ell+\tau)^+ &\text{in } \displaystyle{ (\mathcal{C}_{u,v}^\ell)^\varepsilon} \vspace{0.2cm} \\
0 &\text{in }\displaystyle{ \overline{\Omega}\setminus(\mathcal{C}_{u,v}^\ell)^\varepsilon}. \end{array}\right.
$$
Observe that  
 $u^\ell+\tau\geq v^\ell$ in $(\mathcal{C}_{u,v}^\ell)^{\varepsilon}$ and indeed that $u^\ell+\tau > v^\ell$ in $\mathcal{C}_{u,v}^\ell= \emptyset$. 
 Moreover  we set  $w^\ell_\tau=0$ in $\Omega$ for  $\ell \notin \mathcal{L}$. Since $u^\ell+\tau<v^\ell$ on  $\partial (\mathcal{C}_{u,v}^\ell)^\varepsilon$, then we infer that $\boldsymbol{w_\tau} \in W_0^{1,p}(\Omega)$. Moreover,
$$
\nabla w^\ell_\tau= \left\{\begin{array}{ll} \nabla u^\ell-\nabla v^\ell  &\text{where } \displaystyle{ w_\tau>0} \vspace{0.2cm} \\
0 &\text{otherwise. }\end{array}\right.
$$

Introducing $\boldsymbol{w_\tau}$ as a test function in \eqref{weak}, we obtain that
\beq\label{uno}
\displaystyle{ \int_{\Omega} |D {\boldsymbol u}|^{p-2}D {\boldsymbol u} : D \boldsymbol{w_\tau}\, dx   = \int_{\Omega} \boldsymbol{f}(x,\boldsymbol{u}) \cdot \boldsymbol{w_\tau}\, dx = \int_{\bigcup_{\ell\in\mathcal{L}} (\mathcal{C}_{u,v}^\ell)^\varepsilon} \boldsymbol{f}(x,\boldsymbol{u}) \cdot \boldsymbol{w_\tau}}\, dx.
\eeq
Since $f$ fulfills \eqref{hpf} (in particular $(iii)$ of \eqref{hpf}),  using the assumption \eqref{eq:raffmor}  (i.e. $\boldsymbol{u} \leq \boldsymbol{v} $ in $\Omega$), \eqref{ineq} and that $\boldsymbol{u}$ is solution of \eqref{system} (namely the  inequality ${\boldsymbol f}(x, {\boldsymbol v})\leq -{\bf\Delta}_p {\boldsymbol v}$  holds in $\Omega$),  we obtain
\beq\label{due}
\displaystyle{
 \int_{\bigcup_{\ell\in\mathcal{L}} (\mathcal{C}_{u,v}^\ell)^\varepsilon} \boldsymbol{f}(x,\boldsymbol{u}) \cdot \boldsymbol{w_\tau} \, dx \leq \int_{\bigcup_{\ell\in\mathcal{L}} (\mathcal{C}_{u,v}^\ell)^\varepsilon} \boldsymbol{f}(x,\boldsymbol{v}) \cdot \boldsymbol{w_\tau} \, dx  \leq \int_{\Omega} |D \boldsymbol{v}|^{p-2}D \boldsymbol{v} : D \boldsymbol{w_\tau}} \, dx.
\eeq
We point out that, to get the first inequality in \eqref{due} we follow the computations in~\eqref{eq:(20)21}.

By \eqref{uno} and \eqref{due}, one has
\begin{eqnarray}
\label{tre}
&&\int_{\Omega} \left( |D {\boldsymbol u}|^{p-2}D {\boldsymbol u}-|D {\boldsymbol u}|^{p-2}D \boldsymbol{v} \right): D \boldsymbol{w_\tau}\, dx\\\nonumber
&&=\int_{w_\tau>0} \left( |D {\boldsymbol u}|^{p-2}D {\boldsymbol u}-|D {\boldsymbol u}|^{p-2}D \boldsymbol{v} \right): (D {\boldsymbol u}-D \boldsymbol{v}) \, dx\leq 0.
\end{eqnarray}
Applying in \eqref{tre}  the inequality \eqref{eq:lucio}, we obtain   that 
$$
\displaystyle{ \int_{w_\tau>0} (|D \boldsymbol{u}|+|D \boldsymbol{v}|)^{p-2}|D \boldsymbol{u}-D \boldsymbol{v}|^2  \, dx\leq 0}.
$$

Immediately, we have that $\boldsymbol{u-v}$ is a constant where $\boldsymbol{w_\tau}>0$. By continuity of $\boldsymbol{w_\tau}$, $w_\tau^\ell$ must vanish on $\partial (\mathcal{C}^\ell_{u,v})^\varepsilon$ for $\ell\in \mathcal{L}$. Then $w^\ell_\tau=0$ in $(\mathcal{C}^\ell_{u,v})^\varepsilon$, namely $u^\ell<v^\ell$  in~$(\mathcal{C}^\ell_{u,v})^\varepsilon$.
This is a contradiction with the fact that $(\mathcal{C}_{u,v}^\ell)^{\varepsilon} \supset \mathcal{C}^\ell_{u,v} \neq \emptyset$ for~$\ell\in \mathcal{L}$. Therefore we get the thesis.
\end{proof}

Finally, we present a second version of the strong comparison principle. Instead of a separation from the boundary, we will assume that inequality \eqref{ineq} is satisfied strictly. This will allow us to derive a strict relation between $\boldsymbol{u,v}$. Notice that this condition will be provided in the main theorems by the strict monotonicity of $\boldsymbol{f}$. In order to work with sufficiently regular functions, we will consider far away from the critical value set $\mathcal{Z}_u$.

\begin{theorem}\label{thm:strong}(Strong Comparison Principle 2)
Let $C\Subset \Omega\setminus \mathcal Z_{u}$ be a connected set, where $\mathcal Z_{u}$ is defined in \eqref{eq:critraf}. Let us suppose that ${\boldsymbol u}$ is a $C^1(\Omega)$ weak solution to  \eqref{system} and  ${\boldsymbol v}\in C^2{(C)}$ such that   ${\boldsymbol u}\leq {\boldsymbol v}$ in $C$ and 
\begin{equation}\label{eq:regvit}
-{\bf\Delta}_p {\boldsymbol u} -{\boldsymbol f}(x,{\boldsymbol u}) < -{\bf\Delta}_p {\boldsymbol v} -{\boldsymbol f}(x,{\boldsymbol v})\quad \text{in } C,\end{equation}
where ${\boldsymbol f}$ verifies \eqref{hpf}.

Then 
\[\boldsymbol{u}<\boldsymbol{v}\quad \text{in }  C.\]
\end{theorem}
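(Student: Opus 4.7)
The proof is a contradiction argument that reduces to a linear uniformly elliptic inequality, on which a strong maximum principle is invoked. First I would upgrade the regularity of $\boldsymbol u$: since $C\Subset\Omega\setminus\mathcal Z_u$ and $\boldsymbol u\in C^1(\Omega)$, by compactness and continuity $|D\boldsymbol u|\geq c_0>0$ on $\overline C$. The system for $\boldsymbol u$ is then nondegenerate elliptic in $C$, and classical Schauder-type estimates give $\boldsymbol u\in C^{2,\alpha}(C)$. Combined with $\boldsymbol v\in C^2(C)$, the inequality \eqref{eq:regvit} can be read pointwise in $C$. Set $\boldsymbol w:=\boldsymbol v-\boldsymbol u\geq 0$ and suppose, for contradiction, that $w^\ell(x_0)=0$ for some $\ell\in\{1,\ldots,N\}$ and some $x_0\in C$.

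Next I would linearize the $p$-Laplacian along the segment $t\mapsto D\boldsymbol u+tD\boldsymbol w$. Writing the flux $A^\ell_j(\eta):=|\eta|^{p-2}\eta^\ell_j$, the fundamental theorem of calculus yields
\[
|D\boldsymbol v|^{p-2}\partial_j v^\ell-|D\boldsymbol u|^{p-2}\partial_j u^\ell
=\sum_{m,k} B^{\ell m}_{jk}(x)\,\partial_k w^m,\qquad
B^{\ell m}_{jk}(x):=\int_0^1\frac{\partial A^\ell_j}{\partial \eta^m_k}(D\boldsymbol u+tD\boldsymbol w)\,dt.
\]
A direct computation shows that $\frac{\partial A^\ell_j}{\partial \eta^m_k}(\eta)=|\eta|^{p-2}\bigl(\delta_{\ell m}\delta_{jk}+(p-2)|\eta|^{-2}\eta^\ell_j\eta^m_k\bigr)$ is positive definite with ellipticity constants of order $|\eta|^{p-2}$. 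On a ball $B_\rho(x_0)\subset C$ chosen small enough that $|D\boldsymbol u+tD\boldsymbol w|$ stays uniformly bounded away from $0$ for every $t\in[0,1]$, the tensor $(B^{\ell m}_{jk})$ is bounded and uniformly elliptic. Combining this with the Lipschitz expansion $f^\ell(x,\boldsymbol v)-f^\ell(x,\boldsymbol u)=\sum_m c^{\ell m}(x)\,w^m$ given by $(i)$ of \eqref{hpf} (with the cooperative sign $c^{\ell m}\geq 0$ for $m\neq\ell$ coming from $(iii)$), I would recast \eqref{eq:regvit} as the strict linear uniformly elliptic inequality
\[
-\sum_{m,j,k}\partial_j\!\bigl(B^{\ell m}_{jk}\,\partial_k w^m\bigr)-\sum_m c^{\ell m}\,w^m>0\qquad\text{in }B_\rho(x_0).
\]

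This is the $\ell$-th equation of a cooperative linear uniformly elliptic system in the non-negative unknown $\boldsymbol w$. Since $w^\ell\geq 0$ attains its minimum at the interior point $x_0$ with value $0$, the classical strong maximum principle, coupled with the cooperative structure, forces $w^\ell\equiv 0$ in $B_\rho(x_0)$ and hence $\partial_k w^\ell\equiv 0$ there. Substituting back into the strict inequality at $x_0$, the $\ell$-th equation reduces to $0>0$, a contradiction. Therefore no such touching point exists and $\boldsymbol u<\boldsymbol v$ componentwise in $C$.

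The main obstacle is hidden in the linearization step: in the vectorial setting, the touching $u^\ell(x_0)=v^\ell(x_0)$ only forces the $\ell$-th row of $D\boldsymbol w$ to vanish at $x_0$, so the interpolated matrix $D\boldsymbol u+tD\boldsymbol w$ could \emph{a priori} degenerate along the segment through cancellation between different rows. The quantitative lower bound $|D\boldsymbol u|\geq c_0>0$ on $\overline C$, together with the continuity of both $D\boldsymbol u$ and $D\boldsymbol v$, is what makes the restriction to a sufficiently small ball $B_\rho(x_0)$ viable; quantifying this localization so that the integrated coefficient tensor $(B^{\ell m}_{jk})$ is genuinely bounded and uniformly elliptic is the main technical difficulty of the argument.
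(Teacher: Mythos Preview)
There is a structural gap in the linearization step that is more serious than the degeneracy issue you flag at the end. The tensor $B^{\ell m}_{jk}$ couples the components through the \emph{principal part}: for $m\neq\ell$ one has $\partial A^\ell_j/\partial\eta^m_k=(p-2)|\eta|^{p-4}\eta^\ell_j\eta^m_k$, which carries no sign. Hence the $\ell$-th line is not a scalar elliptic inequality for $w^\ell$ with merely cooperative zero-order coupling; it contains genuine second-order terms in the \emph{other} components $w^m$. The classical strong maximum principle for cooperative (weakly coupled) systems requires all cross-coupling to sit in the zero-order part, and systems coupled in the principal part need not satisfy any componentwise maximum principle. Your deduction that $w^\ell\equiv 0$ in $B_\rho(x_0)$ is therefore unjustified. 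Even granting it, the final contradiction also fails: with $w^\ell\equiv 0$ the $\ell$-th equation still retains the terms $-\partial_j(B^{\ell m}_{jk}\partial_k w^m)$ and $-c^{\ell m}w^m$ for $m\neq\ell$, so it does not reduce to $0>0$.

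The paper sidesteps linearization entirely. On a small ball $B_{4R}(x_0)\subset C$ where $\boldsymbol u\in C^2$, it multiplies $\boldsymbol u$ by a smooth cutoff $\varphi_\varepsilon$ equal to $1$ on $B_R$ and to $1-\varepsilon$ on $B_{3R}\setminus B_{2R}$; the positivity of $\boldsymbol u$ then yields the strict separation $\boldsymbol u_\varepsilon<\boldsymbol v$ on $\partial B_{3R}$, while a direct pointwise expansion gives $-\boldsymbol\Delta_p\boldsymbol u_\varepsilon=\varphi_\varepsilon^{p-1}(-\boldsymbol\Delta_p\boldsymbol u)+O(\varepsilon)$, so the strict inequality \eqref{eq:regvit} persists for $\boldsymbol u_\varepsilon$ when $\varepsilon$ is small. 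This places one exactly in the hypotheses of the earlier Strong Comparison Principle~1 (Theorem~\ref{thm:boundarysep}), whose proof is a nonlinear test-function argument relying only on the monotonicity $(iii)$ of \eqref{hpf} and the inequality \eqref{eq:lucio}, never on a linear maximum principle for systems. Since $\boldsymbol u_\varepsilon\equiv\boldsymbol u$ in $B_R$, one obtains $\boldsymbol u<\boldsymbol v$ in $B_R$, and the arbitrariness of $x_0$ concludes.
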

\begin{proof}  
 Let us fix $x_0\in C$ and let us choose $R>0$ small enough such that $B_{4R}=B_{4R}(x_0)\subset C$. Moreover we also have $D {\boldsymbol u}(x)\neq 0$ in $B_{4R}$.

Obviously, by the assumptions, $\boldsymbol{u} \leq \boldsymbol{v} $ on $\partial B_{3R}$. For $\varepsilon >0$ let us define $\varphi_\varepsilon \in C^{\infty}(B_{3R})$ given by
\begin{equation}\label{eq:ted0}
\varphi_\varepsilon:=
\begin{cases}
1 & \text{in}\,\, B_R \\
1-\varepsilon & \text{in}\,\,  B_{3R}\setminus B_{2R},
\end{cases} 
\end{equation}
such that $|\nabla \varphi_\varepsilon|, |\Delta \varphi_\varepsilon|\leq C \varepsilon$.

By standard regularity, $\boldsymbol{u}\in C^2(B_{4R})$, since $D {\boldsymbol u}(x)\neq 0$.  Hence let $\boldsymbol{u}_\varepsilon:=\boldsymbol{u}\varphi_{\varepsilon} \in C^2(B_{3R},\mathbb{R}^N)$. Therefore
\begin{eqnarray}\label{eq:ted00}
&&-\Delta_p \boldsymbol{u}_\varepsilon=-\operatorname{div}(|D(\boldsymbol{u}\varphi_\varepsilon)|^{p-2}D(\boldsymbol{u}\varphi_\varepsilon))\\\nonumber
&&= -|D {\boldsymbol u}_\varepsilon|^{p-2}\Delta \boldsymbol{u}_{\varepsilon}-(p-2)|D \boldsymbol{u}_\varepsilon|^{p-4}\left(
\begin{array}{c}
\sum_{m=1}^N\sum_{\ell,j=1}^n\partial_{x_\ell}u^1_\varepsilon\partial_{x_j}u^m_\varepsilon\partial^2_{x_ix_j}u^m_\varepsilon\\
\vdots\\
\sum_{m=1}^N\sum_{\ell,j=1}^n\partial_{x_\ell}u^l_\varepsilon\partial_{x_j}u^m_\varepsilon\partial^2_{x_ix_j}u^m_\varepsilon\\
\vdots\\
\sum_{m=1}^N\sum_{\ell,j=1}^n\partial_{x_\ell}u^N_\varepsilon\partial_{x_j}u^m_\varepsilon\partial^2_{x_ix_j}u^m_\varepsilon
\end{array}\right)
\\\nonumber
&&:=P_1+ P_2,
\end{eqnarray}
with $\ell=1,\ldots,N.$
We have that
\begin{equation}\label{eq:ted1}
\nabla u_{\varepsilon}^\ell=\nabla u^\ell \varphi_\varepsilon+ u^\ell\nabla \varphi_\varepsilon .
\end{equation} Using Taylor formula, for $\ell=1, \ldots, N$, we infer that
\begin{eqnarray}\label{eq:ted2}
|\nabla u^\ell_\varepsilon|^2 &=& \varphi_\varepsilon^2|\nabla u^\ell|^2+2\varphi_\varepsilon u^\ell \nabla u^\ell \cdot \nabla \varphi_\varepsilon+ O(u^\ell|\nabla \varphi_\varepsilon|)\\\nonumber
&=& \varphi_\varepsilon^2|\nabla u^\ell|^2 + O(\varepsilon^2),
\end{eqnarray}
since definition  \eqref{eq:ted0}. Therefore, since $D {\boldsymbol u}(x)\neq 0$ in $B_{4R}$,  by~\eqref{eq:ted2} we deduce
\begin{eqnarray}\label{eq:ted3}
\quad |D {\boldsymbol u}_\varepsilon|^{p-2}&=&\left(\sum_{\ell=1}^N |\nabla u_\varepsilon^\ell|^2\right)^{\frac{p-2}{2}}\\\nonumber&=&\varphi_\varepsilon^{p-2}|D {\boldsymbol u}|^{p-2}
+\frac{p-2}{2} \varphi_\varepsilon^{p-4} \left(\sum_{\ell=1}^N |\nabla u^\ell|^2\right)^{\frac{p-4}{2}}O(\varepsilon^2)+O (\varepsilon^4)
\\\nonumber
&=&\varphi_\varepsilon^{p-2}|D \boldsymbol{u}|^{p-2} + O (\varepsilon^2).
\end{eqnarray}
We estimate now the two terms of 
\eqref{eq:ted00}:
\begin{eqnarray}\nonumber
&&P_1:=-|D {\boldsymbol u}_\varepsilon|^{p-2}\boldsymbol{\Delta} \boldsymbol{u}_\varepsilon=-|D {\boldsymbol u}_\varepsilon|^{p-2}\left(
\begin{array}{c}
2\nabla u^1\nabla \varphi_\varepsilon + \Delta u^1\varphi_\varepsilon +u^1\Delta \varphi_\varepsilon\\
\vdots\\
2\nabla u^\ell\nabla \varphi_\varepsilon + \Delta u^\ell\varphi_\varepsilon +u^\ell\Delta \varphi_\varepsilon\\
\vdots \\
2\nabla u^N\nabla \varphi_\varepsilon + \Delta u^N\varphi_\varepsilon +u^N\Delta \varphi_\varepsilon
\end{array}
\right)\\\nonumber
&&=-|D {\boldsymbol u}_\varepsilon|^{p-2}\varphi_\varepsilon \boldsymbol{\Delta} \boldsymbol{u}+
\left (
\begin{array}{c}
O(\varepsilon)\\
\vdots\\
O(\varepsilon)\\
\vdots \\
O(\varepsilon)
\end{array}\right)=-|D \boldsymbol{u}|^{p-2}\varphi_\varepsilon^{p-1}\boldsymbol{\Delta} \boldsymbol{u}+\left (
\begin{array}{c}
O(\varepsilon)\\
\vdots\\
O(\varepsilon)\\
\vdots \\
O(\varepsilon)
\end{array}\right),
\end{eqnarray}
where in the last inequality we used \eqref{eq:ted3}. Using definition \eqref{eq:ted0}, we also have
\begin{eqnarray}\nonumber
\partial_{x_i}u^m_\varepsilon&=&\partial_{x_i}u^m\varphi_\varepsilon + O(\varepsilon)\\\nonumber
\partial^2_{x_ix_j}u^m_\varepsilon&=&\partial^2_{x_i x_j}u^m\varphi_\varepsilon + O(\varepsilon).
\end{eqnarray}
Then we deduce that
\begin{eqnarray}\label{eq:ted4}\\\nonumber
P_2&:=&-(p-2)|D \boldsymbol{u}_\varepsilon|^{p-4}\left(
\begin{array}{c}
\sum_{m=1}^N\sum_{i,j=1}^n\partial_{x_i}u^1_\varepsilon\partial_{x_j}u^m_\varepsilon\partial^2_{x_ix_j}u^m_\varepsilon\\
\vdots\\
\sum_{m=1}^N\sum_{i,j=1}^n\partial_{x_i}u^l_\varepsilon\partial_{x_j}u^m_\varepsilon\partial^2_{x_ix_j}u^m_\varepsilon\\
\vdots\\
\sum_{m=1}^N\sum_{i,j=1}^n\partial_{x_i}u^N_\varepsilon\partial_{x_j}u^m_\varepsilon\partial^2_{x_ix_j}u^m_\varepsilon
\end{array}\right)
\\\nonumber
\\\nonumber
&=&
-(p-2)|D \boldsymbol{u}_\varepsilon|^{p-4}\left(
\begin{array}{c}
\sum_{m=1}^N\sum_{i,j=1}^n\varphi_\varepsilon^3\partial_{x_i}u^1\partial_{x_j}u^m\partial^2_{x_ix_j}u^m+O(\varepsilon)\\
\vdots\\
\sum_{m=1}^N\sum_{i,j=1}^n\varphi_\varepsilon^3\partial_{x_i}u^l\partial_{x_j}u^m\partial^2_{x_ix_j}u^m+O(\varepsilon)\\
\vdots\\
\sum_{m=1}^N\sum_{i,j=1}^n\varphi_\varepsilon^3\partial_{x_i}u^N\partial_{x_j}u^m\partial^2_{x_ix_j}u^m+O(\varepsilon)
\end{array}\right)\\\nonumber
&=&-(p-2)\varphi_\varepsilon^{p-4}|D \boldsymbol{u}|^{p-4}\left(
\begin{array}{c}
\sum_{m=1}^N\sum_{i,j=1}^n\varphi_\varepsilon^3\partial_{x_i}u^1\partial_{x_j}u^m\partial^2_{x_ix_j}u^m+O(\varepsilon)\\
\vdots\\
\sum_{m=1}^N\sum_{i,j=1}^n\varphi_\varepsilon^3\partial_{x_i}u^l\partial_{x_j}u^m\partial^2_{x_ix_j}u^m+O(\varepsilon)\\
\vdots\\
\sum_{m=1}^N\sum_{i,j=1}^n\varphi_\varepsilon^3\partial_{x_i}u^N\partial_{x_j}u^m\partial^2_{x_ix_j}u^m+O(\varepsilon)
\end{array}\right).
\end{eqnarray}
Hence, we can conclude that
\[-\boldsymbol{\Delta}_p \boldsymbol{u}_\varepsilon=P_1+P_2=\varphi_\varepsilon^{p-1}(-\boldsymbol{\Delta}_p \boldsymbol{u}) + O(\varepsilon).\]
Then for suitable small $\varepsilon$
\begin{eqnarray}\label{eq:ted5}
&&-\boldsymbol{\Delta}_p \boldsymbol{u}_\varepsilon -\boldsymbol{f}(x,\boldsymbol{u}_\varepsilon)=\varphi_\varepsilon^{p-1}\boldsymbol{f}(x,\boldsymbol{u})-\boldsymbol{f}(x,\boldsymbol{u}_\varepsilon)+ O(\varepsilon)\\\nonumber
&&=\boldsymbol{f}(x,\boldsymbol{u})(\varphi_\varepsilon^{p-1}-1) +\boldsymbol{f}(x,\boldsymbol{u})-\boldsymbol{f}(x,\boldsymbol{u}_\varepsilon) + (-\boldsymbol{\Delta}_p \boldsymbol{u}-\boldsymbol{f}(x,\boldsymbol{u}))+ O(\varepsilon)\\\nonumber
&&<-\boldsymbol{\Delta}_p \boldsymbol{v}-\boldsymbol{f}(x,\boldsymbol{v}) \, \, \mbox{ in } B_{3R},
\end{eqnarray}
where the last inequality follows since \eqref{eq:regvit} is in force and because $\boldsymbol{u,v}\in C^2(B_{4R})$ and  ${\boldsymbol f}$ is a continuous vectorial function. 
By \eqref{eq:ted5} and taking into account that $\boldsymbol{u}_\varepsilon< \boldsymbol{v}$ in $\partial B_{3R}$, then we can apply Theorem~\ref{thm:boundarysep} to state that $\boldsymbol{u}_\varepsilon<\boldsymbol{v}$ in $B_{3R}$. Since $\boldsymbol{u}_\varepsilon \equiv \boldsymbol{u}$ in $B_R$, the conclusion follows by the arbitrarily of $x_0$.
\end{proof}

\begin{remark}
In the previous result we have assumed that $\boldsymbol{v}\in C^2(C)$. This condition is automatically satisfied if the $C$ is far away from the critical set $\mathcal{Z}_v$. As a consequence, the conclusion is true if one considers $C\Subset \Omega \setminus (\mathcal{Z}_u \cup \mathcal{Z}_v)$. Actually, Theorem~\ref{thm:strong} will be used to prove Theorems~\ref{thm:main1},\ref{thm:main2} verifying these circumstances.
\end{remark}

\section{Main results}\label{mainproof}
\setcounter{equation}{0}

This section is devoted to prove Theorems~\ref{thm:main1} and Theorem~\ref{thm:main2} by using of the moving-planes technique, once weak and strong comparison principles have been deduced in the previous section.

In order to do it, we will introduce some notations and assumptions that we are going to use in the following. For a real number $\lambda$, let 
\begin{equation}\label{eq:whawha}\Omega_\lambda=\Omega \cap \{x_n\leq \lambda\},\end{equation}
\[x_\lambda =R_{\lambda}(x):=(x_1,\ldots,2\lambda -x_n),\]
which is the point reflected through the hyperplane
\[T_\lambda =\{x\in \mathbb R^n\, :\, x_n=\lambda \}.\]  
Also set 
\begin{equation}\label{eq:a}
a=\inf_{x\in \Omega}\{x_n\},
\end{equation}
and 
\[\boldsymbol{u}_{\lambda}=\boldsymbol{u}(x_\lambda)=\boldsymbol{u}(x_1,\ldots, x_{n-1}, 2\lambda -x_n).\]

\

In the following results, we shall deduce some properties concerning the critical set $\mathcal{Z}_u$ defined as \eqref{eq:critraf}.

First of all, we point out that by standard elliptic regularity arguments, a $C^1(\Omega)$ solution $\boldsymbol{u}$ of \eqref{system} with $\boldsymbol{f}$ satisfying \eqref{hpf} is indeed smooth in $\Omega\setminus \mathcal{Z}_u$, see \cite{GT}. Consequently, the distributional derivatives of $|D \boldsymbol{u}|^{p-2}u^\ell_{j}$ coincide with the classical ones.

By using the Stampacchia's Theorem, (see  for instance \cite[Lemma 7.7]{GT}, \cite[Theorem~1.56]{Troia}), indeed the generalized derivatives of $|D \boldsymbol{u}|^{p-2} u^\ell_{ij}$ are zero almost everywhere in $\left\{u^\ell_{j}=0 \right\}$. Moreover, in the following we shall use the following notion of distributional derivative
$$
 u^\ell_{i j}= \left\{\begin{array}{ll}  u^\ell_{i j}  &\text{in } \Omega \setminus \mathcal{Z}_u \vspace{0.2cm} \\
0 &\text{in } \mathcal{Z}_u. \end{array}\right.
$$
In addition, we will adopt this convention for the gradients $D u^\ell_i$.

\

Next, we shall stress some properties concerning $|D \boldsymbol{u}|^{p-2} D^2 \boldsymbol{u}$ and $|D \boldsymbol{u}|^{p-1}$ under the assumption $|D \boldsymbol{u}|^{p-2} D \boldsymbol{u} \in W_{loc}^{1,2}(\Omega)$. Recall that this condition is deduced in \cite{Cma} under some general hypotheses on $\boldsymbol{f}$ and $p$. Although, these results appear implicitly in the aforementioned reference, let us deduce them to apply in the rest of the section.
\begin{lemma}\label{lemmaL2loc}
Let $\boldsymbol{u}\in C^1(\overline \Omega)$ such that $|D \boldsymbol{u}|^{p-2}D \boldsymbol{u} \in W_{loc}^{1,2}(\Omega) $. Then
\begin{itemize}
\item[(i)] $|D \boldsymbol{u}|^{p-2} D^2 \boldsymbol{u} \in L_{loc}^{2}(\Omega)$ for $1<p<3$,
\item[(ii)] $|D \boldsymbol{u}|^{p-1} \in W_{loc}^{1,2}(\Omega)$.
\end{itemize}
\end{lemma}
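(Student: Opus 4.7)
The plan is to obtain both statements from a single pointwise computation off the critical set $\mathcal Z_u$, coupled with the Stampacchia-type convention recalled just before the statement. On $\Omega\setminus\mathcal Z_u$ classical elliptic regularity guarantees $\boldsymbol u\in C^2$ (as already noted in the paragraph preceding the lemma), so the chain rule yields
$$
\partial_i\bigl(|D\boldsymbol u|^{p-2}u^\ell_j\bigr)
=|D\boldsymbol u|^{p-2}u^\ell_{ij}+(p-2)|D\boldsymbol u|^{p-4}\Bigl(\sum_{m,k}u^m_k\,u^m_{ik}\Bigr)u^\ell_j.
$$
Fixing the index $i$, I would arrange the vectors $W^i=(\partial_i(|D\boldsymbol u|^{p-2}u^\ell_j))_{j,\ell}$, $Q^i=(u^\ell_{ij})_{j,\ell}$ and $Z=(u^\ell_j)_{j,\ell}$ in $\mathbb R^{Nn}$, noting that $|Z|=|D\boldsymbol u|$, so that the identity takes the compact form $W^i=M(Z)\,Q^i$ with
$$
M(Z)=|Z|^{p-2}I+(p-2)|Z|^{p-4}\,Z\otimes Z.
$$
A direct diagonalization shows that $M(Z)$ has eigenvalues $|Z|^{p-2}$ (of multiplicity $Nn-1$) and $(p-1)|Z|^{p-2}$; hence $M(Z)$ is positive definite for every $p>1$, and inverting gives the pointwise bound
$$
|D\boldsymbol u|^{p-2}|Q^i|\leq C_p\,|W^i|,\qquad C_p=\max\{1,(p-1)^{-1}\},\quad\text{on }\Omega\setminus\mathcal Z_u.
$$
Squaring, summing over $i$, and invoking the Stampacchia convention (which forces $u^\ell_{ij}=0$ on $\mathcal Z_u$, so that the left-hand side vanishes there), statement (i) follows by integrating on any $K\Subset\Omega$:
$$
\int_K\bigl||D\boldsymbol u|^{p-2}D^2\boldsymbol u\bigr|^2\,dx\leq C_p^2\int_K\bigl|\nabla(|D\boldsymbol u|^{p-2}D\boldsymbol u)\bigr|^2\,dx<\infty,
$$
the last integral being finite by the hypothesis $|D\boldsymbol u|^{p-2}D\boldsymbol u\in W_{loc}^{1,2}(\Omega)$.

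For (ii) I would exploit (i) directly. On $\Omega\setminus\mathcal Z_u$ one computes
$$
\partial_i|D\boldsymbol u|^{p-1}=(p-1)|D\boldsymbol u|^{p-3}\sum_{m,k}u^m_k\,u^m_{ik},
$$
and Cauchy-Schwarz dominates the sum by $|D\boldsymbol u|\,|D^2\boldsymbol u|$, giving $|\nabla|D\boldsymbol u|^{p-1}|\leq (p-1)|D\boldsymbol u|^{p-2}|D^2\boldsymbol u|\in L^2_{loc}(\Omega)$ by (i). Since $\boldsymbol u\in C^1(\overline\Omega)$ yields $|D\boldsymbol u|^{p-1}\in L^\infty(\Omega)$, combining these bounds with the Stampacchia convention on $\mathcal Z_u$ produces $|D\boldsymbol u|^{p-1}\in W_{loc}^{1,2}(\Omega)$.

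The main obstacle is the rigorous passage from the classical identity, valid only on $\Omega\setminus\mathcal Z_u$, to a genuine distributional identity holding on all of $\Omega$: one must check that no singular contribution concentrates along $\partial\mathcal Z_u$ when integrating by parts against a test function. This is precisely what the Stampacchia-type property encodes for each scalar component $|D\boldsymbol u|^{p-2}u^\ell_j$, and it is exactly here that the restriction $1<p<3$ in (i) is expected to enter, since it controls the size of the singular coefficient $|D\boldsymbol u|^{p-4}$ that appears in the chain-rule expansion as one approaches $\mathcal Z_u$.
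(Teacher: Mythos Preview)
Your argument is correct and, in the key step, genuinely different from the paper's. Both proofs start from the same chain-rule identity for $\partial_i(|D\boldsymbol u|^{p-2}u^\ell_j)$, but the paper solves for $|D\boldsymbol u|^{p-2}u^\ell_{ij}$ by squaring, using Young's inequality on the cross term, and then summing over all indices $i,j,\ell$ so that the second-derivative contribution on the right can be absorbed into the left; the absorption inequality reads $(1-(1+\varepsilon^2)(2-p)^2)\,|D\boldsymbol u|^{2(p-2)}|D^2\boldsymbol u|^2\le C_\varepsilon h^2$, and this is precisely where the restriction $1<p<3$ enters, since one needs $(2-p)^2<1$. Your route via the spectrum of $M(Z)=|Z|^{p-2}I+(p-2)|Z|^{p-4}Z\otimes Z$ sidesteps the absorption altogether: the eigenvalues $|Z|^{p-2}$ and $(p-1)|Z|^{p-2}$ are positive for every $p>1$, so your pointwise bound $|D\boldsymbol u|^{p-2}|Q^i|\le C_p|W^i|$ actually holds without the range restriction. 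Part (ii) is handled identically in both proofs.

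Your final paragraph misplaces the role of $1<p<3$: it is \emph{not} needed to justify the passage from the classical identity on $\Omega\setminus\mathcal Z_u$ to a global statement---the paper treats that exactly as you do, through the Stampacchia convention recalled just before the lemma, and at the same informal level. The restriction is purely algebraic in the paper's absorption step, and your own argument does not require it. So your proof is complete at the level of rigor the paper adopts, and in fact yields (i) for all $p>1$.
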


\begin{proof}

Let us differentiate $|D \boldsymbol{u}|^{p-2} u^{\ell}_j$ with respect to $x_i$ with $i,j\in \{ 1,\ldots, n\}$ and $\ell \in \{1,\ldots,N\}$, namely
\begin{equation}\label{deri}
\frac{\partial}{\partial x_i}\left( |D \boldsymbol{u}|^{p-2} u^\ell_{j} \right) = |D \boldsymbol{u}|^{p-2}  {u}^\ell_{ij} + (p-2) |D \boldsymbol{u}|^{p-4}(D \boldsymbol{u} : {D}\boldsymbol{u}_i )  {u}^\ell_{j}:=h(x).
\end{equation}

Since $|D \boldsymbol{u}|^{p-2} D  \boldsymbol{u} \in W^{1,2}_{loc}$, then  $h\in L^2_{loc}(\Omega)$. By Young's inequality, one has
\begin{align}\label{deri1}
&|D \boldsymbol{u}|^{2(p-2)}  ({u}^\ell_{ij})^2
 \nonumber \\& =
 h^2+(2-p)^2 |D \boldsymbol{u}|^{2(p-4)}(D \boldsymbol{u} : {D}\boldsymbol{u}_i )^2  ({u}^\ell_{j})^2+ 
 2h(2-p)|D \boldsymbol{u}|^{p-4}(D \boldsymbol{u} : {D}\boldsymbol{u}_i )  {u}^\ell_{j}
\nonumber \\& \leq C_\varepsilon h^2+ (1+\varepsilon^2)(2-p)^2 |D \boldsymbol{u}|^{2(p-4)}(D \boldsymbol{u} : {D}\boldsymbol{u}_i )^2  ({u}^\ell_{j})^2 \nonumber \\
& \leq  C_\varepsilon h^2 + (1+\varepsilon^2)(2-p)^2 |D \boldsymbol{u}|^{2(p-3)}|D \boldsymbol{u}_i|^2  ({u}^\ell_{j})^2.
\end{align}
Adding the previous expressions with respect to $i=1,\ldots,n$, we obtain
\begin{align}\label{deri2}
|D \boldsymbol{u}|^{2(p-2)}  |\nabla u_j^\ell|^2 
 & =\nonumber \sum_{i=1}^n|D \boldsymbol{u}|^{2(p-2)}  (u^\ell_{ij})^2 \\
& \leq  n C_\varepsilon h^2 + (1+\varepsilon^2)(2-p)^2 |D \boldsymbol{u}|^{2(p-3)}(u^\ell_{j})^2 \sum_{i=1}^n |D \boldsymbol{u}_i|^2 \\
& \leq  n C_\varepsilon h^2 + (1+\varepsilon^2)(2-p)^2 |D \boldsymbol{u}|^{2(p-3)}(u^\ell_{j})^2 |D^2 \boldsymbol{u}|^2. \nonumber
\end{align}
Now, we add \eqref{deri2} with respect to $j=1,\ldots,n$,
\begin{align}\nonumber
|D \boldsymbol{u}|^{2(p-2)}  |D^2 u^\ell|^2 & = \sum_{j=1}^n |D \boldsymbol{u}|^{2(p-2)}  |\nabla u_j^\ell|^2   \\\nonumber
& \leq  n^2C_\varepsilon h^2 + (1+\varepsilon^2)(2-p)^2 |D \boldsymbol{u}|^{2(p-3)} |D^2 \boldsymbol{u}|^2 |\nabla u^{\ell}|^2. 
\end{align}

Putting together all the components, we arrived at
\begin{align}\nonumber
 |D \boldsymbol{u}|^{2(p-2)}  |D^2 \boldsymbol{u}|^2  & =  
 \sum_{\ell=1}^N  |D \boldsymbol{u}|^{2(p-2)}  |D^2 u^\ell|^2 
  \nonumber \\\nonumber
& \leq  Nn^2C_\varepsilon h^2 + (1+\varepsilon^2)(2-p)^2 |D \boldsymbol{u}|^{2(p-3)} |D^2 \boldsymbol{u}|^2 |D \boldsymbol{u}|^2,
\end{align}
equivalent to
$$
(1-(1+\varepsilon^2)(2-p)^2) |D \boldsymbol{u}|^{2(p-2)}  |D^2 \boldsymbol{u}|^2 \leq C_\varepsilon h^2,
$$
which completes the proof of $(i)$.

By differentiating $|D \boldsymbol{u}|^{p-1}$ with respect to $x_i$, one gets 

\begin{equation}\nonumber\frac{\partial}{\partial x_i}\left( |D \boldsymbol{u}|^{p-1} \right) = (p-1) |D \boldsymbol{u}|^{p-3}(D \boldsymbol{u} : {D}\boldsymbol{u}_i )  \leq (p-1) | D \boldsymbol{u}|^{p-2}| |D^2 \boldsymbol{u} |,
\end{equation}
which gives us to $(ii)$ by taking into account $(i)$.
\end{proof}

Due to the positivity of $\boldsymbol{f}$, it is proved that $\mathcal{Z}_u$ is a zero measure set and $\Omega \setminus \mathcal{Z}_u$ is connected.
\begin{proposition}\label{lemmaZuzero}
Let ${\boldsymbol u} \in C^1(\overline{\Omega})$ be a weak solution to \eqref{system} such that $|D{\boldsymbol u}|^{p-2}D{\boldsymbol u} \in W_{loc}^{1,2}(\Omega) $. If ${\boldsymbol f}$ is a positive function in $\Omega$, then the critical set $\mathcal{Z}_u$ has zero Lebesgue measure.
\end{proposition}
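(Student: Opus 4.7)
My plan is to exploit the hypothesis $|D\boldsymbol{u}|^{p-2}D\boldsymbol{u}\in W^{1,2}_{loc}(\Omega)$ together with the positivity of $\boldsymbol{f}$, combining them via Stampacchia's theorem applied componentwise to each Sobolev function $|D\boldsymbol{u}|^{p-2}u^{\ell}_j$. The key observation is that on $\mathcal{Z}_u$ each of these functions vanishes identically (because $u^\ell_j=0$ there), so the $p$-Laplacian of each component of $\boldsymbol{u}$ must be zero almost everywhere on $\mathcal{Z}_u$, which together with $f^\ell>0$ immediately forces $|\mathcal{Z}_u|=0$.

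First, I would fix $\ell\in\{1,\ldots,N\}$ and $j\in\{1,\ldots,n\}$ and consider the scalar function $g^\ell_j:=|D\boldsymbol{u}|^{p-2}u^\ell_j$. By hypothesis $g^\ell_j\in W^{1,2}_{loc}(\Omega)$, and by the very definition of $\mathcal{Z}_u$ we have $g^\ell_j\equiv 0$ on $\mathcal{Z}_u$. Applying Stampacchia's theorem, as stated in \cite[Lemma 7.7]{GT} or \cite[Theorem~1.56]{Troia}, to the level set $\{g^\ell_j=0\}\supseteq\mathcal{Z}_u$, we obtain
\begin{equation*}
\frac{\partial}{\partial x_j}\bigl(|D\boldsymbol{u}|^{p-2}u^\ell_j\bigr)=0\quad\text{a.e. on }\mathcal{Z}_u,
\end{equation*}
for every $j=1,\ldots,n$ (and every $\ell$). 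This is consistent with the convention introduced just before the statement, which gives meaning to the distributional derivatives even across the degeneracy locus.

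Summing over $j=1,\ldots,n$, the weak formulation of the $p$-Laplacian yields
\begin{equation*}
-\Delta_p u^\ell=-\sum_{j=1}^n\frac{\partial}{\partial x_j}\bigl(|D\boldsymbol{u}|^{p-2}u^\ell_j\bigr)=0\quad\text{a.e. on }\mathcal{Z}_u.
\end{equation*}
On the other hand, $\boldsymbol{u}$ is a weak solution of \eqref{system}, so $-\Delta_p u^\ell=f^\ell(x,\boldsymbol{u})$ a.e. in $\Omega$; comparing these two identities we obtain $f^\ell(x,\boldsymbol{u})=0$ a.e. on $\mathcal{Z}_u$ for each $\ell$. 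Since by assumption $\boldsymbol{f}$ is positive in $\Omega$, i.e.\ $f^\ell(x,\boldsymbol{u}(x))>0$ for every $x\in\Omega$, the set where this identity can hold must have zero Lebesgue measure, which gives $|\mathcal{Z}_u|=0$.

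The main obstacle is really only a bookkeeping one, namely justifying that the distributional derivative $\partial_j g^\ell_j$ (which a priori is only an $L^2_{loc}$ function produced by the Sobolev regularity of $|D\boldsymbol{u}|^{p-2}D\boldsymbol{u}$) coincides with the classical one off $\mathcal{Z}_u$ and vanishes on $\mathcal{Z}_u$ by Stampacchia, so that its sum over $j$ recovers the $p$-Laplacian used in the equation. This is already addressed in the discussion preceding the statement (recall that $\boldsymbol{u}$ is smooth away from $\mathcal{Z}_u$ by standard elliptic regularity \cite{GT}, and the distributional derivatives coincide there with the classical ones), so the argument goes through without further ado.
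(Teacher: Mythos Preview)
Your argument is correct, but it follows a different route from the paper's own proof. The paper does not invoke Stampacchia's theorem here; instead it works entirely at the level of the weak formulation by testing \eqref{weak} against $\boldsymbol{\varphi}\,\dfrac{|D\boldsymbol{u}|^{p-1}}{\varepsilon+|D\boldsymbol{u}|^{p-1}}$ for an arbitrary $\boldsymbol{\varphi}\in C^\infty_c(\Omega)$, uses Lemma~\ref{lemmaL2loc}(ii) to control the extra term generated by the cutoff, and then lets $\varepsilon\to 0$ via dominated convergence to conclude $\int_{\Omega\setminus\mathcal{Z}_u}\boldsymbol{\varphi}\cdot\boldsymbol{f}=\int_\Omega\boldsymbol{\varphi}\cdot\boldsymbol{f}$, whence $\boldsymbol{f}=0$ a.e.\ on $\mathcal{Z}_u$.

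Your approach is more elementary in that it bypasses the approximation argument and Lemma~\ref{lemmaL2loc} altogether: once one observes (as you do) that the hypothesis $|D\boldsymbol{u}|^{p-2}D\boldsymbol{u}\in W^{1,2}_{loc}(\Omega)$ upgrades the weak equation to a pointwise a.e.\ identity $-\sum_j\partial_j(|D\boldsymbol{u}|^{p-2}u^\ell_j)=f^\ell$, Stampacchia applied to each $g^\ell_j$ on its zero level set (which contains $\mathcal{Z}_u$) finishes the job in one line. The paper's route, on the other hand, stays purely within the variational framework and never needs to interpret the equation pointwise; this makes it slightly more robust as a template for situations where the pointwise identity is less immediate. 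Both arguments ultimately land on the same contradiction $f^\ell=0$ a.e.\ on $\mathcal{Z}_u$ versus $f^\ell>0$.
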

\begin{proof}

Consider ${\boldsymbol \varphi} \in C^{\infty}_c(\Omega)$, then for $\varepsilon>0$, then
$$
{\boldsymbol \varphi} \frac{|D {\boldsymbol u}|^{p-1}}{\varepsilon+|D {\boldsymbol u}|^{p-1}} \in W_0^{1,p}(\Omega).
$$
Therefore, we can introduce it as a function test in \eqref{weak} to obtain that
\begin{align}\label{eq:lemmaZuzero1}\nonumber
\int_{\Omega} \frac{|D {\boldsymbol u}|^{p-1}}{\varepsilon+|D {\boldsymbol u}|^{p-1}} {\boldsymbol \varphi} \cdot {\boldsymbol f }\, dx &= \int_{\Omega} |D {\boldsymbol u}|^{p-2} D {\boldsymbol u} : D \left( {\boldsymbol \varphi} \frac{|D {\boldsymbol u}|^{p-1}}{\varepsilon+|D {\boldsymbol u}|^{p-1}} \right)\, dx \vspace{0.3cm} \\
& = \int_{\Omega} |D {\boldsymbol u}|^{p-2} D {\boldsymbol u} : D {\boldsymbol \varphi} \frac{|D {\boldsymbol u}|^{p-1}}{\varepsilon+|D {\boldsymbol u}|^{p-1}} \, dx
\\\nonumber& +  \int_{\Omega} |D {\boldsymbol u}|^{p-2} \varepsilon {\boldsymbol \varphi}  \frac{ D {\boldsymbol u} : D (|D {\boldsymbol u}|^{p-1})}{(\varepsilon+|D {\boldsymbol u}|^{p-1})^2}\, dx.
\end{align}

Observe now that,
\begin{equation}\nonumber
\int_{\Omega} |D {\boldsymbol u}|^{p-2} \varepsilon {\boldsymbol \varphi}  \frac{ D {\boldsymbol u} : D (|D {\boldsymbol u}|^{p-1})}{(\varepsilon+|D {\boldsymbol u}|^{p-1})^2} \, dx\leq \int_{\Omega} |{\boldsymbol \varphi} D( |D { \boldsymbol u }|^{p-1})|\, dx<C, 
\end{equation}
due to  $|D { \boldsymbol u }|^{p-1} \in W^{1,2}_{loc}(\Omega)$. Moreover, for $\varepsilon\to 0$, by using the Lebesgue’s Dominated Convergence Theorem in \eqref{eq:lemmaZuzero1}, we get
$$
\int_{\Omega \setminus \{D {\boldsymbol u} =0\} } {\boldsymbol \varphi} \cdot {\boldsymbol f }\, dx
 = \int_{\Omega} |D  {\boldsymbol u}|^{p-2} D {\boldsymbol u } : D {\boldsymbol \varphi} \, dx= \int_{\Omega} {\boldsymbol \varphi} \cdot {\boldsymbol f}\, dx.
$$
This implies that ${\boldsymbol f} (x)=0$ a.e. where $D {\boldsymbol u }(x)=0$. Since ${\boldsymbol  f} $ is a positive function in $\Omega$, we arrived at the desired conclusion.
\end{proof}

In the following result, we are able to show that $\Omega \setminus \mathcal{Z}_u$ is connected.

\begin{proposition}\label{lem:connesso}
Let $\boldsymbol{u}\in C^1(\overline \Omega)$ be a weak solution to the system
\eqref{system}. Assume that {$|D \boldsymbol{u}|^{p-2}D \boldsymbol{u} \in W^{1,2}_{loc}(\Omega) $} and  that ${\boldsymbol f}$ is a positive function in $\Omega$.
Then the set $\Omega\setminus \mathcal{Z}_u$ does not contain any connected component $\mathcal C$ such that 
$\overline {\mathcal C} \subset \Omega.$ Moreover if $\Omega$ is a smooth bounded domain with connected boundary, it follows that $\Omega\setminus \mathcal{Z}_u$ is connected.
\end{proposition}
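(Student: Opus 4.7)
The plan is a two-step argument: proof by contradiction for the first assertion, followed by a boundary/Hopf-lemma analysis that reduces the second assertion to the first.

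For the first assertion, suppose there is a connected component $\mathcal{C}$ of $\Omega\setminus \mathcal{Z}_u$ with $\overline{\mathcal{C}}\Subset \Omega$. Components of an open set are separated by its complement, so $\partial\mathcal{C}\subset \mathcal{Z}_u$, and for every $\ell=1,\ldots,N$ the field $G^\ell := |D\boldsymbol{u}|^{p-2}\nabla u^\ell$ is continuous on $\Omega$, belongs to $W^{1,2}_{loc}(\Omega)$ by hypothesis, and vanishes on $\partial\mathcal{C}$. The heuristic is the divergence-theorem identity
\[
\int_{\mathcal{C}} f^\ell\,dx \;=\; -\int_{\mathcal{C}} \operatorname{div}(G^\ell)\,dx \;=\; -\int_{\partial \mathcal{C}} G^\ell\cdot \nu\,d\sigma \;=\; 0,
\]
which would contradict the positivity $f^\ell>0$. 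To make this rigorous I would test \eqref{weak} with $\boldsymbol{\varphi}_\varepsilon=\phi_\varepsilon\,\boldsymbol{e}_\ell$, where
\[
\phi_\varepsilon(x) \;=\; \min\!\bigl(1,\; \tfrac{1}{\varepsilon}\,\operatorname{dist}(x,\,\Omega\setminus \mathcal{C})\bigr) \in W^{1,\infty}_0(\Omega).
\]
One has $0\le\phi_\varepsilon\uparrow \chi_{\mathcal{C}}$ pointwise, the support of $\phi_\varepsilon$ is contained in $\overline{\mathcal{C}}\Subset \Omega$, and $\nabla\phi_\varepsilon$ is concentrated in the shrinking tubular layer $A_\varepsilon=\{x\in\mathcal{C}:\operatorname{dist}(x,\partial\mathcal{C})<\varepsilon\}$. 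The resulting identity $\int G^\ell\cdot \nabla\phi_\varepsilon = \int f^\ell\phi_\varepsilon$ has right-hand side tending to $\int_{\mathcal{C}}f^\ell>0$ by monotone convergence, so it suffices to show that its left-hand side vanishes in the limit.

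The technical heart lies exactly here. I would rely on two ingredients: (i) uniform continuity of $G^\ell$ together with $G^\ell=0$ on $\partial\mathcal{C}$ gives $\sup_{A_\varepsilon}|G^\ell|\to 0$; (ii) because $G^\ell\in W^{1,2}_{loc}$ is continuous and vanishes on $\partial\mathcal{C}$, the zero-extension $G^\ell\chi_{\mathcal{C}}$ remains in $W^{1,2}_{loc}(\Omega)$, with no singular contribution produced at $\partial\mathcal{C}$. Combining a cut-off/integration-by-parts estimate with these two facts forces $\int G^\ell\cdot \nabla\phi_\varepsilon\to 0$, yielding the contradiction. The main obstacle is precisely this boundary bookkeeping: a priori $\partial\mathcal{C}$ need not possess locally finite $(n-1)$-dimensional Hausdorff measure, so the classical divergence theorem is not immediately applicable, and one has to exploit the Sobolev/continuous vanishing of $G^\ell$ on $\partial\mathcal{C}$ to pass to the limit in a distributional sense.

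For the second assertion, assume $\Omega$ is smooth and $\partial\Omega$ is connected. Testing \eqref{weak} with $(u^\ell)^{-}\boldsymbol{e}_\ell$ and using the positivity of $\boldsymbol{f}$ forces $u^\ell\ge 0$; the strong maximum principle of Section~\ref{preli} then upgrades this to $u^\ell>0$ in $\Omega$ for each $\ell$. The Hopf lemma (Theorem \ref{thm:hopf}) yields $\partial_\nu u^\ell<0$ on $\partial\Omega$, hence $D\boldsymbol{u}\ne 0$ on $\partial\Omega$, and by continuity $D\boldsymbol{u}\ne 0$ in some tubular neighborhood $\mathcal{V}$ of $\partial\Omega$ inside $\Omega$. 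Since $\partial\Omega$ is smooth and connected, so is $\mathcal{V}$, hence $\mathcal{V}\subset \Omega\setminus \mathcal{Z}_u$ lies inside a single connected component $\mathcal{C}_0$. Any other component $\mathcal{C}'\ne \mathcal{C}_0$ would satisfy $\overline{\mathcal{C}'}\cap\partial\Omega=\emptyset$, i.e.\ $\overline{\mathcal{C}'}\subset \Omega$, contradicting the first assertion. Therefore $\Omega\setminus \mathcal{Z}_u=\mathcal{C}_0$ is connected.
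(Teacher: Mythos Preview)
Your treatment of the second assertion coincides with the paper's: Hopf's lemma forces $D\boldsymbol{u}\neq 0$ in a connected collar of $\partial\Omega$, so any putative second component of $\Omega\setminus\mathcal{Z}_u$ would be compactly contained in $\Omega$, contradicting the first assertion.

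For the first assertion your overall strategy---test with an approximation of $\chi_{\mathcal{C}}\boldsymbol{e}_\ell$ and reach $\int_{\mathcal{C}}f^\ell=0$---is also the paper's, but the implementations diverge precisely at the point you flag as the ``main obstacle''. With the distance cutoff $\phi_\varepsilon$ one only has $|\nabla\phi_\varepsilon|\le 1/\varepsilon$ on the layer $A_\varepsilon$, and since $\partial\mathcal{C}$ may be irregular there is no control on $|A_\varepsilon|/\varepsilon$; your fact (i) alone is therefore insufficient. You then invoke (ii), that $G^\ell\chi_{\mathcal{C}}\in W^{1,2}_{loc}$, without proof and without saying how it closes the estimate. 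In fact (ii) is true and, once established, makes $\phi_\varepsilon$ unnecessary: if $\nabla(G^\ell\chi_{\mathcal{C}})=(\nabla G^\ell)\chi_{\mathcal{C}}$, then testing against any $\psi\in C_c^\infty(\Omega)$ with $\psi\equiv 1$ near $\overline{\mathcal{C}}$ gives $\int_{\mathcal{C}}f^\ell=-\int \operatorname{div}(G^\ell\chi_{\mathcal{C}})\psi=\int G^\ell\chi_{\mathcal{C}}\cdot\nabla\psi=0$. A clean way to prove (ii) is to truncate each scalar component $g:=G^\ell_j$ by $g_n:=(g-\tfrac1n)^+-(-g-\tfrac1n)^+$; since $g$ is continuous and vanishes on $\partial\mathcal{C}$, each $g_n$ vanishes in a full neighbourhood of $\partial\mathcal{C}$, so $g_n\chi_{\mathcal{C}}\in W^{1,2}$ trivially, and $g_n\chi_{\mathcal{C}}\to g\chi_{\mathcal{C}}$ in $W^{1,2}$ by Stampacchia's lemma. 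As written, though, your proposal stops at naming (ii), so the argument is incomplete.

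The paper sidesteps the whole boundary issue by a different choice of cutoff: it tests with
\[
\Psi^\ell \;=\; \frac{J_\varepsilon(|\nabla u^\ell|)}{|\nabla u^\ell|}\,\chi_{\mathcal{C}},\qquad J_\varepsilon(t)=0 \text{ for } t\le\varepsilon .
\]
Because $|\nabla u^\ell|\le |D\boldsymbol{u}|\to 0$ at $\partial\mathcal{C}$, this $\Psi^\ell$ has compact support in $\mathcal{C}$ for every fixed $\varepsilon$, so no regularity of $\partial\mathcal{C}$ is needed at all. The price is that $\nabla\Psi^\ell$ involves $D^2u^\ell$; the paper controls the resulting integrand by $|D\boldsymbol{u}|^{p-2}|D^2u^\ell|$, which lies in $L^2_{loc}$ by Lemma~\ref{lemmaL2loc}, and concludes by dominated convergence. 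Thus the paper trades your boundary claim (ii) for the second-order integrability of Lemma~\ref{lemmaL2loc}; your route, once (ii) is properly justified, uses only the first-order hypothesis $G^\ell\in W^{1,2}_{loc}\cap C$ and is in that sense more elementary.
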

\begin{proof}
By contradiction, let us assume that a such component $\mathcal C$ exists, that is let $\mathcal C$ such that
\[\mathcal C\subset \Omega \qquad \text{with}\qquad \partial \mathcal C \subset \mathcal{Z}_u.\] 
For all $\varepsilon>0$, let us define  $J_\varepsilon:\mathbb{R}^+\cup\{0\}\rightarrow\mathbb{R}$ by setting
\begin{equation}\label{eq:G}
J_\varepsilon(t)=\begin{cases} t & \text{if  $t\geq 2\varepsilon$} \\
2t-2\varepsilon& \text{if $\varepsilon\leq t\leq2\varepsilon$}
\\ 0 & \text{if $0\leq t\leq \e$}.
\end{cases}
\end{equation}
We shall use
\[\varphi=(0,\ldots,\Psi^\ell,0,\ldots,0) \]
as a test function in \eqref{system}, with $\ell\in\{1,\ldots,N\}$, where
\begin{equation}\label{eq:ghdsagadsjgdj}\Psi^\ell=\displaystyle\frac{J_\varepsilon(|\nabla u^\ell|)}{|\nabla u^\ell|}\chi_{\mathcal C}.\end{equation}
Moreover, since $\supp \Psi^\ell \subset \mathcal C$ we have that $\varphi\in W^{1,p}_0(\mathcal C, \mathbb R^N)$. Integrating by parts we get
\begin{equation}\nonumber
\int_{\mathcal C}|D {\boldsymbol u}|^{p-2} \nabla u^\ell\cdot \nabla \Psi^\ell dx= \int_{\mathcal C}f^\ell(x,u)\Psi^\ell dx
\end{equation}
namely
\begin{equation}\label{eq:4g1}
\int_{\mathcal C}|D {\boldsymbol u}|^{p-2} \nabla u^\ell\cdot \nabla \left(\frac{J_\varepsilon(|\nabla u^\ell|)}{|\nabla u^\ell|} \right) dx= \int_{\mathcal C}f^\ell(x,u)\Psi^\ell dx
\end{equation}
Remarkably, using the test function $\Psi^\ell$ defined in \eqref{eq:ghdsagadsjgdj}, we are able to integrate on the boundary~$\partial \mathcal C$ which  could be not regular.   We estimate the first term on the left-hand side of \eqref{eq:4g1}. Denoting $h_\varepsilon(t)=J_\varepsilon(t)/t$, we have 
\begin{eqnarray}\label{eq:4g2}
&&\left| \int_{\mathcal C}|D {\boldsymbol u}|^{p-2} \nabla u^\ell\cdot \nabla \left(\frac{J_\varepsilon(|\nabla u^\ell|)}{|\nabla u^\ell|} \right) dx\right|\\\nonumber
&&\leq C\int_{\mathcal C}|D {\boldsymbol u}|^{p-2}\Big(|\nabla u^\ell|h_\varepsilon'(|\nabla u^\ell|)\Big)||D^2 u^\ell|| dx.
\end{eqnarray}
We claim
\begin{itemize}
\item [$(i)$] $|D {\boldsymbol u}|^{p-2}||D^2 u||  \in L^1(\mathcal C)$;

\

\item [$(ii)$] $|\nabla u^\ell|h_\varepsilon'(|\nabla u^\ell|)\rightarrow 0$ a.e. in $\mathcal C$ as $\varepsilon \rightarrow 0$ and $|\nabla u^\ell|h_\varepsilon'(|\nabla u^\ell|)\leq C$ with $C$ not depending on $\varepsilon$.
\end{itemize}
Let us  prove $(i)$. Combining H\"older's inequality and Lemma~\ref{lemmaL2loc}, it follows
\begin{equation}\label{eq:smm4}
\int_{\mathcal C}|D {\boldsymbol u}|^{p-2}||D^2u^\ell|| \, dx\leq C(\mathcal C)\left( \int_{\mathcal C}|D {\boldsymbol u}|^{2(p-2)}||D^2u^\ell||^2\right)^{\frac 12} \, dx \leq C
\end{equation}
\

Let us  prove $(ii)$. Exploiting the definition \eqref{eq:G}, by straightforward calculation we obtain
$$
h'_\varepsilon(t)=
\begin{cases} 0 & \text{if  $t> 2\varepsilon$} \\
\frac{2\varepsilon}{t^2}& \text{if $\varepsilon< t<2\varepsilon$}
\\ 0 & \text{if $0\leq t< \e$},
\end{cases}
$$
and then we have $|\nabla u^\ell|h_\varepsilon'(|\nabla u^\ell|)\rightarrow 0$ a.e. for $\varepsilon\rightarrow 0$ in $\mathcal C$ and  $|\nabla u^\ell|h_\varepsilon'(|\nabla u^\ell|)\leq 2$.

\

Then, using dominated convergence and  equation \eqref{eq:4g2} we have
\[\int_{\mathcal C}|D {\boldsymbol u}|^{p-2} \nabla u^\ell\cdot \nabla \left(\frac{J_\varepsilon(|\nabla u^\ell|)}{|\nabla u^\ell|} \right) dx\rightarrow 0,\]
as $\varepsilon \rightarrow 0$ and therefore from \eqref{eq:4g1} we get a contradiction since for $\varepsilon \rightarrow 0$ we have that 
\[\int_{\mathcal C}f^\ell(x,u)\, dx>0.
\]

If $\Omega$ is smooth, we can apply Theorem~\ref{thm:hopf} to state that a neighborhood of the boundary belongs to a component $\mathcal{C}$ of $\Omega \setminus \mathcal{Z}_u$. By what we have proved previously, there not exist a second component $\tilde{\mathcal{C}}$. Therefore, $\Omega \setminus \mathcal{Z}_u$ is connected as desired.

\end{proof}

\

Finally, we have collected all the tools to apply the moving-planes technique and complete the proof of both main theorems.

\begin{proof}[Proof of Theorem~\ref{thm:main1}]
Since $\boldsymbol{u}>0$ in $\Omega$ and $\boldsymbol{u}=0$ on $\partial \Omega$, then $\boldsymbol{u}\leq \boldsymbol{u}_\lambda$ on $\partial \Omega_\lambda$ for $\lambda\in(a,0)$, where  $\Omega_\lambda$  is defined in \eqref{eq:whawha}. We also have that $\boldsymbol{u}_\lambda$ satisfies the equation 
$$
-\boldsymbol{\Delta}_p \boldsymbol{u}_\lambda = f(x_\lambda,\boldsymbol{u}_\lambda) \quad\mbox{in } \Omega_\lambda.
$$
Using Hopf's Lemma, see Theorem \ref{thm:hopf} we have that $u^\ell$, for all $\ell=1,\ldots,N$, are strictly increasing in the $x_n$-direction, near the boundary $\partial \Omega$. Namely, there exists  some $\varepsilon>0$ such that for $a<\lambda<a+\varepsilon$, it holds  
\[\boldsymbol{u}\leq \boldsymbol{u}_\lambda \quad \text{in }\Omega_\lambda, \] 
where we have defined $a$ in \eqref{eq:a}.

As a result, the set 
\[\Lambda:=\Big\{\lambda \in \mathbb R  \,: \, \boldsymbol{u}\leq \boldsymbol{u}_t \,\, \text{in}\,\, \Omega_\lambda\,:\, a<t\leq\lambda\Big\}\]
is not empty. Let us now set
\[\bar \lambda= \sup \Lambda.\] 

Next, we shall prove the following 

\

\noindent {\sc Claim:} $\bar\lambda =0$. Assume the contrary, namely let us suppose that $\bar\lambda <0$. By continuity we have that 
\[\boldsymbol{u}\leq \boldsymbol{u}_{\bar{\lambda}} \quad\text{in } \Omega_{\bar\lambda}.\]
By using  \eqref{eq:fcresc}, we have that in the weak sense 
\[-\boldsymbol{\Delta}_p \boldsymbol{u}_{\bar\lambda} =f(x_{\bar\lambda},\boldsymbol{u}_{\bar\lambda})>f(x,\boldsymbol{u}_{\bar\lambda})\quad \text{in } \Omega_{\bar\lambda}.\] Consequently, since
\[-\boldsymbol{\Delta}_p \boldsymbol{u} -f(x,\boldsymbol{u})<-\boldsymbol{\Delta}_p \boldsymbol{u}_{\bar\lambda}-f(x,\boldsymbol{u}_{\bar\lambda}) \quad \text{in } \Omega_{\bar\lambda}.\]
by strong comparison principle, see Theorem~\ref{thm:strong}, we deduce 
\[\boldsymbol{u}< \boldsymbol{u}_{\bar{\lambda}} \quad\text{in } \Omega_{\bar\lambda}\setminus (\mathcal{Z}_{u} \cup \mathcal{Z}_{u_{\bar\lambda}} ). \]
Next, fix a compact set $K$ so large such that 
\[\boldsymbol{u}<\boldsymbol{u}_{\bar \lambda +\varepsilon}\,\, \text{in}\,\, K\Subset \left(\Omega_{\bar \lambda} \setminus (\mathcal{Z}_{u} \cup \mathcal{Z}_{u_{\bar\lambda}} ) \right).\]
If the Lebeasgue measure  $|K|$ is sufficiently big and $\varepsilon$ small we infer that
\[\Omega_{\bar \lambda +\varepsilon}\setminus K := \tilde \Omega,\]
is such that 
 $\tilde \Omega\subseteq\Omega$, \[\bigcup_{x'\in \Omega'} Z_{x'}^{\varepsilon} \cup L_{x'}^\theta\subseteq\tilde\Omega\]
where, eventually taking $K$ bigger and $\varepsilon $ smaller, the sets  $Z_{x'},L_{x'}$ satisfy  assumptions $(i)$ and $(ii)$ of the weak comparison principle, Theorem \ref{thm:weak1}. Moreover, let us observe that $\boldsymbol{u}\leq \boldsymbol{u}_{\bar{\lambda}+\varepsilon} $ in $\partial \tilde\Omega$. 
Then, by Theorem \ref{thm:weak1}, we deduce that 
\[\boldsymbol{u}\leq \boldsymbol{u}_{\bar\lambda+\varepsilon}\,\, \text{in}\,\, \Omega_{\bar \lambda+\varepsilon}\setminus K\]
and hence $\boldsymbol{u}\leq \boldsymbol{u}_{\bar\lambda+\varepsilon}$ in $\Omega_{\bar \lambda+\varepsilon}$, for all $0<\varepsilon\leq\bar \varepsilon$, with $\bar\varepsilon\in \mathbb R$ small.
 This fact contradicts  the assumption $\bar\lambda=\sup \Lambda$. Thus
\[\bar \lambda=0.\]
Repeating the proof in the opposite direction, we obtain that 
\[ \boldsymbol{u} \equiv \boldsymbol{u}_{0},\]
namely we get the symmetry of solutions, with respect to the hyperplane $T_0$, in the $x_n$-direction. 
Now, let us prove that the solution is  monotone increasing with respect to  the $x_n$-direction in $\Omega_0=\{x\in \Omega \, :\, x_n<0\}$. Let $(x',s_1), (x',s_2) \in \Omega_0$. Then for $\lambda=(s_1+s_2)/2$ it holds $u \leq u_{\lambda}$ in $\Omega_{\lambda}$, so 
\begin{equation*}
u(x',s_1)\leq u(x',s_2).
\end{equation*}
\end{proof}
\begin{proof}[Proof of Theorem~\ref{thm:main2}]
The proof follows straightforward from the given in the previous Theorem. It suffices to stress that by \eqref{eq:monface}, one obtains that $\boldsymbol{u}\leq \boldsymbol{u}_\lambda$ on $\partial \mathcal{P}_\lambda$ for $\lambda\in(a,0)$, including the faces $\mathcal{P}^\bot_i$ for $i=-1,1$. This allows us to apply the Hopf's Lemma, weak comparison principle given in Theorem~\ref{thm:weak1} and Theorem~\ref{thm:strong}, jointly to \eqref{eq:monface}, in the corresponding steps.
\end{proof}

One can obtain the same conclusion of Theorem~\ref{thm:main2} introducing an integral condition on the edges instead of the monotonicity of the solution on $\mathcal{P}^\bot_i$. Actually, this condition allows us to obtain a weak comparison principle and, then, start the moving-planes procedure.
\begin{theorem}
Let $\mathcal{P}$ be a bounded pipe type domain of $\mathbb{R}^n$, $n\geq 2$, satisfying \ref{hpP}. Let $\boldsymbol{u}$ be a $C^1(\overline\Omega)$ weak solution to \eqref{system2} with $1<p\leq 2$ such that $|D\boldsymbol{ u }|^{p-2} D\boldsymbol{u} \in W^{1,2} (\Omega)$, $\boldsymbol{f}$ verifies \eqref{hpf}, \eqref{eq:fcresc} and \eqref{eq:fsym}. Suppose that for every $\lambda\in (a,0]$
\begin{equation}\label{eq:hipfront}
\int_{\mathcal{P}^\bot_{-1} \cup \mathcal{P}^\bot_{1}} |D {\boldsymbol u}|^{p-2}\frac{\partial \boldsymbol{u}}{\partial \hat n}:(\boldsymbol{u}-\boldsymbol{u}_\lambda)^+d\sigma=0,
\end{equation}
where $\hat n$ denotes the outer unit vector to the subset $\mathcal{P}^\bot_{i}$ with $i=-1,1$.
\end{theorem}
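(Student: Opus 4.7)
The plan is to adapt the moving-planes scheme of Theorem~\ref{thm:main2} to yield the same symmetry/monotonicity conclusion, the single new ingredient being a weak comparison principle on $\mathcal{P}_\lambda$ that can do without the a priori monotonicity on the faces \eqref{eq:monface}.

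First, I would establish the adapted weak comparison. Given $\lambda \in (a, 0)$, I test the equations for $\boldsymbol{u}$ and $\boldsymbol{u}_\lambda$ in $\mathcal{P}_\lambda$ with $\boldsymbol{\varphi} = (\boldsymbol{u} - \boldsymbol{u}_\lambda)^+$ and subtract. After integrating by parts, the only boundary contribution comes from $\mathcal{P}^\bot_{\pm 1} \cap \mathcal{P}_\lambda$: on $T_\lambda$ one has $\boldsymbol{u} = \boldsymbol{u}_\lambda$, hence $\boldsymbol{\varphi} = 0$, and on $\partial_w \mathcal{P} \cap \mathcal{P}_\lambda$ one has $\boldsymbol{u} = 0 \leq \boldsymbol{u}_\lambda$, so again $\boldsymbol{\varphi} = 0$. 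The piece of the face integral coming from $\boldsymbol{u}$ vanishes directly by \eqref{eq:hipfront}. For the piece coming from $\boldsymbol{u}_\lambda$, I would perform the change of variable $y \mapsto R_\lambda y$ on the face (each face $\mathcal{P}^\bot_{\pm 1}$ is invariant under $R_\lambda$, with $|D \boldsymbol{u}_\lambda(y)| = |D\boldsymbol{u}(R_\lambda y)|$ and $\partial_{\hat n}\boldsymbol{u}_\lambda(y) = \partial_{\hat n}\boldsymbol{u}(R_\lambda y)$), rewriting it as an integral of $|D\boldsymbol{u}|^{p-2} \partial_{\hat n} \boldsymbol{u}$ against $(\boldsymbol{u}_\lambda - \boldsymbol{u})^+$ on the opposite half of the face, and then combine it with \eqref{eq:hipfront} (split through $T_\lambda$) to conclude vanishing. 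From this point the argument of Theorem~\ref{thm:weak1} applies verbatim in $\mathcal{P}_\lambda$.

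Next, the moving planes is carried out as in Theorem~\ref{thm:main1}: Hopf's Lemma (Theorem~\ref{thm:hopf}) applied at points of $\partial_w \mathcal{P}$ near the bottom $\{x_n = a\}$ yields $\partial_{x_n} \boldsymbol{u} > 0$ in a small strip, hence $\boldsymbol{u} \leq \boldsymbol{u}_\lambda$ in $\mathcal{P}_\lambda$ for $\lambda$ close to $a$. I then define $\bar\lambda = \sup\{\lambda : \boldsymbol{u} \leq \boldsymbol{u}_t \text{ in } \mathcal{P}_t \text{ for all } t \in (a, \lambda]\}$ and argue by contradiction that $\bar\lambda < 0$ is impossible. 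Under this assumption, continuity gives $\boldsymbol{u} \leq \boldsymbol{u}_{\bar\lambda}$ in $\mathcal{P}_{\bar\lambda}$; then \eqref{eq:fcresc} and Theorem~\ref{thm:strong}, applied on the connected open set $\mathcal{P}_{\bar\lambda} \setminus (\mathcal{Z}_u \cup \mathcal{Z}_{u_{\bar\lambda}})$ (non-empty by Propositions~\ref{lemmaZuzero}--\ref{lem:connesso}), strengthen this to $\boldsymbol{u} < \boldsymbol{u}_{\bar\lambda}$. Choosing a compact set $K \Subset \mathcal{P}_{\bar\lambda} \setminus (\mathcal{Z}_u \cup \mathcal{Z}_{u_{\bar\lambda}})$ with $\boldsymbol{u} < \boldsymbol{u}_{\bar\lambda + \varepsilon}$ on $K$, and applying the adapted weak comparison on $\mathcal{P}_{\bar\lambda + \varepsilon} \setminus K$, extends the ordering to $\mathcal{P}_{\bar\lambda + \varepsilon}$, a contradiction. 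Hence $\bar\lambda = 0$, and the symmetric argument in the opposite direction gives $\boldsymbol{u} \equiv \boldsymbol{u}_0$, with the monotonicity $\partial_{x_n} \boldsymbol{u} \geq 0$ in $\mathcal{P}_0$ following as in Theorem~\ref{thm:main1}.

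The main obstacle is clearly the treatment of the face boundary terms in the weak comparison: because $(\boldsymbol{u} - \boldsymbol{u}_\lambda)^+$ does not vanish on $\mathcal{P}^\bot_{\pm 1}$, integration by parts produces a boundary integral that cannot be discarded as in the Dirichlet setting of Theorem~\ref{thm:weak1}. The hypothesis \eqref{eq:hipfront} is designed exactly to kill the $\boldsymbol{u}$ contribution, and the reflection symmetry of the faces under $R_\lambda$ transfers the $\boldsymbol{u}_\lambda$ contribution into a further application of \eqref{eq:hipfront}. Once this technical point is settled, the remainder of the argument is a direct parallel of the proofs of Theorems~\ref{thm:main1} and \ref{thm:main2}.
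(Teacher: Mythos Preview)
Your overall architecture is right, and it matches the paper's: show that $(\boldsymbol u-\boldsymbol u_\lambda)^+$ can be used as a test function despite not vanishing on the faces, then run Theorem~\ref{thm:weak1} and the moving planes exactly as in Theorems~\ref{thm:main1}--\ref{thm:main2}. The difficulty is localized precisely where you say it is.

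However, your handling of the face boundary terms has a genuine gap. You integrate by parts on $\mathcal P_\lambda$, so the face contribution from $\boldsymbol u$ is
\[
A=\int_{F_\lambda}|D\boldsymbol u|^{p-2}\partial_{\hat n}\boldsymbol u:(\boldsymbol u-\boldsymbol u_\lambda)^+\,d\sigma,
\qquad F_\lambda=(\mathcal P^\bot_{-1}\cup\mathcal P^\bot_{1})\cap\{x_n\le\lambda\}.
\]
Hypothesis \eqref{eq:hipfront} is stated over the \emph{full} faces $\mathcal P^\bot_{-1}\cup\mathcal P^\bot_{1}$, not over $F_\lambda$, so it does not give $A=0$ ``directly''. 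For the $\boldsymbol u_\lambda$ contribution, note first that the faces are \emph{not} invariant under $R_\lambda$ when $\lambda\neq 0$ (only $R_\lambda(F_\lambda)\subset F$, by convexity); more seriously, after your change of variable the integrand carries $(\boldsymbol u_\lambda-\boldsymbol u)^+$, whose support is disjoint from that of $(\boldsymbol u-\boldsymbol u_\lambda)^+$ appearing in \eqref{eq:hipfront}. Splitting \eqref{eq:hipfront} through $T_\lambda$ therefore does not match or cancel the reflected term, and the argument does not close.

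The paper avoids this by a simpler manoeuvre: it integrates by parts on the whole pipe $\mathcal P$ rather than on $\mathcal P_\lambda$. Then the only boundary pieces are $\partial_w\mathcal P$ (where $(\boldsymbol u-\boldsymbol u_\lambda)^+=0$) and the full faces $\mathcal P^\bot_{\pm1}$, and \eqref{eq:hipfront} kills the latter verbatim, yielding
\[
\int_{\mathcal P}|D\boldsymbol u|^{p-2}D\boldsymbol u:D(\boldsymbol u-\boldsymbol u_\lambda)^+\,dx
=\int_{\mathcal P}\boldsymbol f(x,\boldsymbol u)\cdot(\boldsymbol u-\boldsymbol u_\lambda)^+\,dx.
\]
In other words, $(\boldsymbol u-\boldsymbol u_\lambda)^+$ is admissible in the weak formulation of \eqref{system2} itself. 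From there the estimates of Theorem~\ref{thm:weak1} and the moving-planes routine of Theorem~\ref{thm:main2} go through unchanged. So the fix to your outline is to integrate on $\mathcal P$ (not $\mathcal P_\lambda$) and drop the reflection trick entirely; no separate treatment of a $\boldsymbol u_\lambda$ face term is needed.
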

\begin{proof}
First, let us prove that $(\boldsymbol{u}-\boldsymbol{u}_\lambda)^+$ is a suitable test function for \eqref{system2}. Due to the fact that $|D\boldsymbol{ u }|^{p-2} D\boldsymbol{u} \in W^{1,2} (\Omega)$, we have that $-{\boldsymbol \Delta}_p{\boldsymbol u}=   {\boldsymbol f}(x,{\boldsymbol u})$ a. e. in $\mathcal{P}$. Then,
by using the divergence theorem, we have that
\begin{align}\label{eq:hipfront1}
&\int_{ \mathcal{P} } |D\boldsymbol{u}|^{p-2} D\boldsymbol{u} :D (\boldsymbol{u}-\boldsymbol{u}_\lambda)^+dx-  \int_{\partial \mathcal{P} } |D \boldsymbol {u}|^{p-2}\frac{\partial \boldsymbol{u}}{\partial \hat n}:(\boldsymbol{u}-\boldsymbol{u}_\lambda)^+d\sigma= \nonumber\\
& -\int_{ \mathcal{P} } \boldsymbol{div}(|D\boldsymbol{u}|^{p-2} D\boldsymbol{u} )\cdot(\boldsymbol{u}-\boldsymbol{u}_\lambda)^+dx= \int_{ \mathcal{P} } {\boldsymbol f}(x,{\boldsymbol u}) \cdot (\boldsymbol{u}-\boldsymbol{u}_\lambda)^+\, dx .
\end{align}

Since $u_\lambda>0$ in $\mathcal{P}$ and $u=0$ on $\partial_w \mathcal{P}$, then $(\boldsymbol{u}-\boldsymbol{u}_\lambda)^+\equiv 0$ on $\partial_w \mathcal{P}$. This fact, with \eqref{eq:hipfront} gives that the second term in the left hand side of \eqref{eq:hipfront1} vanishes. Then we obtain
\begin{equation}\nonumber
 \int_{ \mathcal{P} } |D\boldsymbol{u}|^{p-2} D\boldsymbol{u} :D (\boldsymbol{u}-\boldsymbol{u}_\lambda)^+dx = \int_{ \mathcal{P} } \boldsymbol{f}(x,\boldsymbol{u})\cdot(\boldsymbol{u}-\boldsymbol{u}_\lambda)^+dx.
\end{equation}
Therefore, $(\boldsymbol{u}-\boldsymbol{u}_\lambda)^+$ is a proper test function for \eqref{system2}. Following the proof of weak comparison principle, Theorem~\ref{weak}, we are in position to establish the same thesis. Moreover, the proof follows the one of Theorem \ref{thm:main2}.
\end{proof}

\vspace{1cm}

\begin{center}{\bf Acknowledgements}\end{center} This paper is part of the fellowship `José Castillejo' CAS19/00356 supported by Ministerio de Ciencia y Universidades (Spain). R. López-Soriano spent a period as visiting researcher at Università della Calabria, he is grateful to the institution for the kind hospitality and thanks L. Montoro and B. Sciunzi for the camaraderie. \\ R. López-Soriano is partially supported by  Agencia Estatal de Investigación (Spain), project PID2019-106122GB-I00/AEI/10.3039/501100011033. L. Montoro and B. Sciunzi are partially supported by PRIN project 2017JPCAPN (Italy): Qualitative and quantitative aspects of nonlinear PDEs, and L. Montoro by  Agencia Estatal de Investigación (Spain), project PDI2019-110712GB-100.

\

\end{document}